\tikzset{cong/.style={draw=none,edge node={node [sloped, allow upside down, auto=false]{$\simeq$}}},
         Isom/.style={draw=none,every to/.append style={edge node={node [sloped, allow upside down, auto=false]{$\simeq$}}}}}   
\newcommandx{\toDoChi}[2][1=]{\todo[linecolor=blue,backgroundcolor=white,bordercolor=blue,#1]{#2}}           
\newcommandx{\todoA}[2][1=]{\todo[linecolor=green,backgroundcolor=white,bordercolor=green,#1]{#2}}          
\newcommandx{\todoN}[2][1=]{\todo[linecolor=red,backgroundcolor=white,bordercolor=red,#1]{#2}}			
\newcommand{\excise}[1]{}
\newcommand{\ZZ}{\mathbb{Z}}
\newcommand{\CC}{\mathbb{C}}
\newtheorem{thm}{Theorem}
\newtheorem{crl}{Corollary}
\newtheorem{theorem}{Theorem}[subsection]
\newtheorem{theorem*}{Theorem}
\newtheorem*{Ftheorem*}{The factorization Theorem}
\newtheorem{proposition}[theorem]{Proposition}
\newtheorem{corollary}[theorem]{Corollary}
\theoremstyle{definition}
\newtheorem{rmk}[theorem]{Remark}
\theoremstyle{definition}
\newtheorem{example}[theorem]{Example}
\begin{document}

\title{Vertex algebras of C\MakeLowercase{oh}FT-type}

\subjclass[2010]{\footnotesize{14H60, 17B69 (primary), 81R10, 81T40, 14D21 (secondary)}}
\keywords{\footnotesize{Vertex algebras, conformal blocks and coinvariants, \linebreak \indent  moduli of curves, cohomological field theories, tautological classes, vector~bundles}}

\dedicatory{Dedicated to Bill Fulton on the occasion of his $80$th birthday.}

\begin{abstract}  
Representations of certain  vertex algebras, here called of CohFT-type, can be used to construct vector bundles of coinvariants and conformal blocks on moduli spaces of stable curves \cite{dgt2}.  We show that such bundles define semisimple cohomological field theories.   As an application, we give an expression for their total Chern character in terms of the fusion rules,  following the approach and computation in \cite{moppz} for  bundles given by integrable modules over affine Lie algebras. It follows that the Chern classes are tautological.  Examples and open problems are discussed.
\end{abstract}


{\tiny{
\author[C.~Damiolini]{Chiara Damiolini}
\address{Chiara Damiolini \newline \indent Department of Mathematics,  \newline \indent University of Pennsylvania, Philadelphia, PA 19104-6395
 \newline \indent {\textit{Previous address:}} Department of Mathematics, Princeton University}
\email{chiarad@sas.upenn.edu}}}
{\tiny{
\author[A.~Gibney]{Angela Gibney}
\address{Angela Gibney \newline \indent Department of Mathematics,  \newline \indent University of Pennsylvania, Philadelphia, PA 19104-6395
\newline \indent {\textit{Previous address:}} Department of Mathematics, Rutgers University}
\email{agibney@sas.upenn.edu}}}
{\tiny{
\author[N.~Tarasca]{Nicola Tarasca}
\address{Nicola Tarasca 
\newline \indent Department of Mathematics \& Applied Mathematics,
\newline \indent Virginia Commonwealth University, Richmond, VA 23284}
\email{tarascan@vcu.edu}}}

\maketitle

Vertex algebras, fundamental in a number of areas of mathematics and mathematical physics,  have recently been shown to be a source of new constructions for vector bundles on moduli of curves \cite{bzf,dgt2}.  In particular, given an $n$-tuple of modules $M^i$ over a  vertex  algebra $V$ satisfying certain natural hypotheses (stated in \S \ref{sec:CohFT}), one may construct the \textit{vector bundle of coinvariants} $\mathbb{V}_g(V; M^\bullet)$ on the moduli space $\overline{\mathcal{M}}_{g,n}$ of  $n$-pointed stable curves of genus $g$ \cite{dgt2}. The fiber at a pointed curve $(C,P_{\bullet})$ is the vector space of coinvariants, i.e., the largest quotient of $\otimes_{i=1}^n M^i$ by the action of a Lie algebra determined by $(C,P_{\bullet})$ and the vertex algebra $V$.    

Such vector bundles generalize the classical coinvariants of integrable modules over affine Lie algebras \cite{TK, tuy}.   Bundles of coinvariants from vertex algebras have much in common with their classical counterparts. For instance, both support a projectively flat logarithmic connection \cite{tuy, dgt} and satisfy factorization  \cite{tuy, dgt2},  a property that makes recursive arguments about ranks and Chern classes possible.  
 
Following  \cite{moppz}, bundles of coinvariants from integrable modules over affine Lie algebras  give cohomological field theories (CohFTs for short). Here we show the same is true for their generalizations.  We say that a vertex algebra $V$ is of \emph{CohFT-type} if $V$ satisfies the hypotheses of \S \ref{sec:CohFT}.  We prove:  
\begin{thm}
\label{thm:CohFT}
For a vertex algebra $V$ of CohFT-type, the collection consisting of the Chern characters of all vector bundles of coinvariants from finitely-generated  $V$-modules forms a semisimple cohomological field theory. 
\end{thm}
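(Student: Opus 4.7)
The plan is to package the Chern characters of the bundles $\mathbb{V}_g(V;M^\bullet)$ into the data of a cohomological field theory, verify the axioms using factorization and propagation of vacua, and then deduce semisimplicity from the fusion algebra. The state space will be the complex vector space $A$ with basis the (finitely many, by the CohFT-type hypotheses) isomorphism classes of simple finitely-generated $V$-modules $W^1,\dots,W^r$, equipped with the pairing $\eta(W^i,W^j)=\mathrm{rk}\,\mathbb{V}_0(V;W^i,W^j)$, which records when $W^j$ is contragredient to $W^i$. The CohFT classes are then given by $\Omega_{g,n}(M^1,\dots,M^n):=\mathrm{ch}(\mathbb{V}_g(V;M^\bullet))\in H^{\ast}(\overline{\mathcal{M}}_{g,n},\mathbb{Q})$, extended multilinearly to $A^{\otimes n}$; well-definedness uses that coinvariants commute with direct sums in each module slot.

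Next, I would verify the CohFT axioms. The $S_n$-equivariance is immediate from the symmetric construction of the coinvariants. The unit axiom follows from propagation of vacua applied to the forgetful morphism $\pi\colon\overline{\mathcal{M}}_{g,n+1}\to\overline{\mathcal{M}}_{g,n}$: inserting the vacuum $V$ at the extra marked point recovers, up to canonical isomorphism, the pullback of $\mathbb{V}_g(V;M^\bullet)$, so the Chern characters agree after $\pi^{\ast}$. The crux is the gluing axioms along the boundary divisors of $\overline{\mathcal{M}}_{g,n}$. For both a separating clutching map and the self-clutching map (non-separating node), the factorization theorem of \cite{dgt2} provides a canonical identification of the pullback of $\mathbb{V}_g(V;M^\bullet)$ with a direct sum, indexed by the simple modules $W^i$, of external tensor products of coinvariant bundles on the partial normalization, with insertions of $W^i$ and its contragredient $(W^i)^{\vee}$ at the two preimages of the node. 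Additivity of $\mathrm{ch}$ under direct sums and multiplicativity under external tensor products then promote this bundle-level isomorphism to the splitting identity required for $\Omega_{g,n}$.

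For semisimplicity I would analyze the commutative Frobenius algebra $(A,\cdot,\eta)$ determined by the genus-zero three-point classes: the structure constants $N_{ij}^{k}=\dim\mathbb{V}_0(V;W^i,W^j,(W^k)^{\vee})$ are the fusion coefficients. Under the rationality/finiteness conditions packaged into CohFT-type, the fusion ring is semisimple -- in the strongly rational vertex operator algebra setting this is a consequence of the modular tensor category structure on the module category -- so diagonalizing the fusion matrices produces an idempotent basis witnessing semisimplicity of the CohFT.

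The hardest part will be matching the factorization isomorphism of \cite{dgt2} to the precise combinatorial form of the CohFT gluing axiom: one must identify the direct sum $\bigoplus_i W^i\otimes (W^i)^{\vee}$ inserted at a normalized node with the canonical diagonal tensor $\sum_{\alpha,\beta}\eta^{\alpha\beta}\,e_\alpha\otimes e_\beta$, and verify compatibility along both the separating clutching morphism $\overline{\mathcal{M}}_{g_1,n_1+1}\times\overline{\mathcal{M}}_{g_2,n_2+1}\to\overline{\mathcal{M}}_{g,n}$ and the self-clutching morphism $\overline{\mathcal{M}}_{g-1,n+2}\to\overline{\mathcal{M}}_{g,n}$. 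A related subtlety is non-degeneracy of $\eta$, i.e.\ that every simple module has a contragredient in the list of simples, which again follows from the representation-theoretic hypotheses packaged into ``CohFT-type.''
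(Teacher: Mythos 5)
Your setup of the CohFT data and your verification of the axioms follow essentially the paper's route: the state space is spanned by the finitely many simple modules, the classes are the Chern characters, and the unit and gluing axioms come from propagation of vacua and the factorization theorem of \cite{dgt2}. Two points, however, need more than you give them. First, the identity $\mathrm{rank}\,\mathbb{V}_0(V;M,N,V)=\delta_{M,N'}$ for simple modules $M,N$ --- which is exactly what makes your pairing $\eta$ the Kronecker pairing against contragredients, and what identifies the factorization sum $\bigoplus_{W} W\otimes W'$ at a node with the bivector $\sum_{\alpha,\beta}\eta^{\alpha\beta}e_\alpha\otimes e_\beta$ required by the gluing axiom --- is not automatic; it is the one computation the paper actually carries out, by identifying the three-point genus-zero conformal blocks with $\mathrm{Hom}_{A(V)}(A(W)\otimes_{A(V)}M_0,N_0)$ via Zhu's algebra and the Frenkel--Zhu bimodule, specializing to $W=V$, and applying Schur's lemma. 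In the literature this identity is recorded only under extra hypotheses (e.g.\ $V\cong V'$, or for lattice vertex algebras), so asserting that $\eta$ ``records when $W^j$ is contragredient to $W^i$'' leaves the load-bearing step unproved.

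Second, your semisimplicity argument invokes the modular tensor category structure on the module category. That structure is known for \emph{strongly rational} vertex operator algebras, i.e.\ under the additional assumptions that $V$ is simple and self-contragredient; the CohFT-type hypotheses (I)--(III) do not include these, so this route does not cover the theorem as stated. The paper instead runs the more elementary argument of Beauville: semisimplicity of the Frobenius algebra $\mathcal{F}(V)_{\mathbb{C}}$ follows from non-negativity of the ranks, the factorization property, and contragredient duality in genus zero, and the latter is itself reduced --- by propagation of vacua and degeneration to a totally degenerate rational curve --- to the same rank identity $\mathrm{rank}\,\mathbb{V}_0(V;M,N,V)=\delta_{M,N'}$. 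Both gaps therefore close once that identity is established, and you should supply its proof rather than cite it implicitly.
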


In particular, the ranks of the bundles of coinvariants form a \textit{topological quantum field theory} (TQFT),  namely, the degree zero part of the CohFT. As such, the ranks are recursively determined by the \textit{fusion rules}, that is, the dimension of spaces of coinvariants  on a three-pointed rational curve (Proposition \ref{prop:CohFTRanks}). The fusion rules have been computed in the literature for many classes of vertex algebras of CohFT-type (see \S\ref{sec:ExamplesSection} for a few examples).

In fact, the CohFTs from Theorem \ref{thm:CohFT} are determined by the fusion rules. Indeed, after work of Givental and Teleman \cite{givental2001gromov, givental2001semisimple, teleman2012structure}, a semisimple CohFT is determined by its TQFT part together with some additional structure (see also \cite{pandharipande2017cohomological}).
As in \cite{moppz}, the explicit computation  of the Atiyah algebra giving rise to the projectively flat logarithmic connection allows one to determine the recursion. As the Atiyah algebra in the case of  bundles of coinvariants from vertex algebras was determined in \cite{dgt}, one is able to extend the reconstruction of the CohFTs of coinvariants from affine Lie algebras in \cite{moppz} to the general case of vertex algebras.

Namely, following  \cite{moppz}, there exists a polynomial $P_V(a_\bullet)$ with coefficients in $H^*\!\left(\overline{\mathcal{M}}_{g,n} \right)$, explicitly given in \S\ref{sec:chernMgnBar}, such that the following holds:

\begin{crl}\label{ChernProp}
For a vertex algebra  $V$ of  CohFT-type  and an $n$-tuple  $M^\bullet$ of  simple $V$-modules with $M^i$ of conformal dimension $a_i$, the Chern character of the vector bundle of coinvariants $\mathbb{V}_g(V; M^\bullet)$ is
\[
{\rm ch}\left(  \mathbb{V}_g(V; M^\bullet)  \right) = P_V(a_\bullet) \quad \mbox{in $H^*\!\left(\overline{\mathcal{M}}_{g,n}\right)$.}
\]
\end{crl}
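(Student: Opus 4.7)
The plan is to deduce the corollary from Theorem \ref{thm:CohFT} together with the Givental--Teleman reconstruction theorem for semisimple CohFTs. By Theorem \ref{thm:CohFT}, the Chern characters $\{\mathrm{ch}(\mathbb{V}_g(V;M^\bullet))\}$ form a semisimple CohFT, whose degree zero part (the TQFT) assigns to an input the rank of the corresponding bundle; these ranks are controlled by the fusion rules via Proposition \ref{prop:CohFTRanks}. By Givental--Teleman, such a CohFT is recovered from its TQFT together with an $R$-matrix action, so to identify the Chern character as a polynomial in the conformal dimensions $a_i$ it suffices to pin down this reconstruction data explicitly in terms of $V$-theoretic invariants.

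Next I would read off the first Chern class from the Atiyah algebra governing the projectively flat logarithmic connection on $\mathbb{V}_g(V;M^\bullet)$. This Atiyah algebra was computed in \cite{dgt}: as in the affine Lie algebra case, it expresses $c_1(\mathbb{V}_g(V;M^\bullet))$ as an explicit linear combination of $\psi$-classes at the marked points (with coefficients linear in the conformal dimensions $a_i$), a multiple of $\kappa_1$ determined by the central charge of $V$, and boundary contributions. This is precisely the input that, in the computation of \cite{moppz}, pins down the Givental $R$-matrix in the affine case.

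From here the plan is to follow \cite{moppz} verbatim. Givental's group action, combined with the semisimplicity of the CohFT, expresses $\mathrm{ch}(\mathbb{V}_g(V;M^\bullet))$ as a universal polynomial in the fusion data of the TQFT and the coefficients of the $R$-matrix, assembled via pushforwards along the boundary gluing morphisms of $\overline{\mathcal{M}}_{g,n}$. Inserting the explicit $c_1$-formula from \cite{dgt} into this universal expression and reducing using the TQFT recursion yields the closed-form polynomial $P_V(a_\bullet)$ of \S\ref{sec:chernMgnBar}. Since every ingredient in this expression lies in the tautological ring, one also concludes that the individual Chern classes $c_k(\mathbb{V}_g(V;M^\bullet))$ are tautological, as asserted in the introduction.

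The main obstacle is verifying that the Atiyah algebra computed in \cite{dgt} for an arbitrary CohFT-type vertex algebra has the same structural form as the one appearing in \cite{moppz} for affine Lie algebras, so that the manipulation of the Givental $R$-matrix and the subsequent recursion transfer without modification. Once that identification is secured, the proof is a direct transcription of the formulas of \cite{moppz}, with the fusion rules of $V$ replacing the Verlinde numbers and with the central charge and conformal dimensions of $V$-modules playing the role of their Kac--Moody analogues.
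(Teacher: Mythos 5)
Your proposal is correct and follows essentially the same route as the paper: Theorem \ref{thm:CohFT} gives semisimplicity, the Atiyah algebra of \cite{dgt} pins down the restriction to $\mathcal{M}_{g,n}$ (Corollary \ref{chMgn}), and the Givental--Teleman/\cite{moppz} machinery produces $P_V(a_\bullet)$. The paper merely compresses your explicit $R$-matrix reconstruction into a citation of \cite[Lemma 2.2]{moppz} (a semisimple CohFT is determined by its restriction to $\mathcal{M}_{g,n}$) together with the fact that \cite{moppz} already identified $P_V(a_\bullet)$ as the unique such extension; note only that the interior first Chern class involves $\lambda$ and $\psi_i$, not $\kappa_1$, and carries no boundary terms at that stage.
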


By Corollary \ref{ChernProp},  Chern classes of  bundles of coinvariants defined by vertex algebras of CohFT-type lie in the tautological ring of $\overline{\mathcal{M}}_{g,n}$.  
As an explicit example of the classes, the first Chern class in $\mathrm{Pic}_\mathbb{Q}(\overline{\mathcal{M}}_{g,n})$ is given by:

\begin{crl}
\label{FirstChern} 
Let  $V$ be a vertex algebra of  CohFT-type and central charge~$c$, and let $M^i$  be simple $V$-modules of conformal dimension $a_{i}$. Then 
\[
c_1\left(  \mathbb{V}_g(V; M^\bullet)  \right) =  {\rm rank} \,\mathbb{V}_g(V;M^\bullet)\left( \frac{c}{2}\lambda + \sum_{i=1}^n a_i \psi_i \right)
-b_{\rm irr} \delta_{\rm irr} - \sum_{i,I} b_{i:I} \delta_{i:I},
\]
\begin{align*}
\mbox{ with } \  b_{\rm irr} & =\sum_{W\in\mathscr{W}} a_W \cdot{\rm rank}\,\mathbb{V}_{g-1}\left(V; M^{\bullet}\otimes W \otimes W'\right) \\
\mbox{ and } \ b_{i:I} &=\sum_{W\in\mathscr{W}} a_W \cdot {\rm rank}\,\mathbb{V}_{i}\left(V;M^I\otimes W \right)\cdot {\rm rank}\,\mathbb{V}_{g-i} \left(V; M^{I^c}\otimes W' \right).
\end{align*}
\end{crl}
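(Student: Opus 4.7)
The plan is to obtain Corollary \ref{FirstChern} by extracting the degree-$1$ component of the polynomial $P_V(a_\bullet)$ appearing in Corollary \ref{ChernProp}, using the equality $c_1 = {\rm ch}_1$. The task then reduces to expressing $[P_V(a_\bullet)]_1$ as an explicit combination of the standard generators $\lambda$, $\psi_i$, $\delta_{\rm irr}$, and $\delta_{i:I}$ of $\mathrm{Pic}_\mathbb{Q}(\overline{\mathcal{M}}_{g,n})$. I would split the computation into an interior contribution and boundary corrections, mirroring the treatment of coinvariants from integrable affine Lie algebra modules in \cite{moppz}.

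For the interior contribution, I would invoke \cite{dgt}, where it is shown that $\mathbb{V}_g(V;M^\bullet)$ carries a projectively flat logarithmic connection whose Atiyah algebra is of Virasoro type with central charge $c$ and with weights $a_i$ at the $i$-th marked point. Standard arguments relating the Chern class of a bundle admitting such a projectively flat connection to the class of its Atiyah algebra then yield the interior contribution $\mathrm{rank}(\mathbb{V})\cdot\bigl(\tfrac{c}{2}\lambda + \sum_{i=1}^n a_i\psi_i\bigr)$ to $c_1$.

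For the boundary contributions, I would apply the factorization theorem \cite{dgt2}: the pullback of $\mathbb{V}_g(V;M^\bullet)$ to the normalization of $\delta_{\rm irr}$ decomposes as $\bigoplus_{W\in\mathscr{W}} \mathbb{V}_{g-1}(V; M^\bullet\otimes W\otimes W')$, and similarly for each $\delta_{i:I}$. The residue of the logarithmic connection on each summand equals the conformal dimension $a_W$ of the module $W$ inserted at the node, so weighting each summand's rank by $a_W$ and summing over $W\in\mathscr{W}$ recovers the coefficients $b_{\rm irr}$ and $b_{i:I}$ with the correct sign. The principal obstacle will be tracking the precise form of these boundary residues: this requires verifying that the Virasoro Atiyah algebra of \cite{dgt} extends logarithmically across each boundary divisor and that its restriction to each factorization summand is governed exactly by the conformal dimension of the inserted module. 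Once this bookkeeping is in place, the formula follows by direct computation, in complete analogy with \cite{moppz}.
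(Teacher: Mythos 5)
Your opening plan---take the degree-one part of the identity ${\rm ch}(\mathbb{V}_g(V;M^\bullet))=P_V(a_\bullet)$ from Corollary \ref{ChernProp}---is exactly how the paper obtains Corollary \ref{FirstChern}, and if you carried it out you would be done in a few lines: only the trivial one-vertex graph and the one-edge graphs contribute in degree one; the former gives ${\rm rank}\,\mathbb{V}_g(V;M^\bullet)\bigl(\tfrac{c}{2}\lambda+\sum_i a_i\psi_i\bigr)$, and for a one-edge graph the constant term of the edge factor $\bigl(1-{\rm e}^{a_{\mu(h)}(\psi_h+\psi_{h'})}\bigr)/(\psi_h+\psi_{h'})$ is $-a_{\mu(h)}$, so $\tfrac{1}{|{\rm Aut}(\Gamma)|}(\xi_\Gamma)_*$ of the product of vertex ranks times $-a_{\mu(h)}$, summed over module assignments, yields precisely $-b_{\rm irr}\delta_{\rm irr}$ and $-b_{i:I}\delta_{i:I}$ (the $\tfrac12$ from ${\rm Aut}(\Gamma)$ cancelling the degree of the gluing map onto $\Delta_{\rm irr}$). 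No further input is needed.

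Instead of doing this extraction, however, you pivot to a different argument: interior term from the Atiyah algebra of Theorem \ref{At} (which is fine, but is already Corollary \ref{chMgn} and is subsumed in Corollary \ref{ChernProp}), and boundary coefficients from residues of the logarithmic connection on the factorization summands. That is a genuinely different route---essentially the pre-CohFT derivation in the affine case---and it has a real gap exactly where you flag it. The residue of the connection along a boundary divisor acts on the summand $\mathbb{V}\bigl(V;M^\bullet\otimes W\otimes W'\bigr)$ through the $L_0$-action at the node, whose eigenvalues on $W$ are $a_W$ plus nonnegative integers coming from the grading of $W$; your claim that the trace of the residue is simply $a_W$ times the rank of the summand therefore requires an argument that the graded pieces of positive degree do not contribute to the divisor coefficient (in the CohFT picture these contributions are accounted for by the higher-order $\psi_h+\psi_{h'}$ terms of the edge factor, not by $\delta$). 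You neither prove this nor cite a result that does, so as written the boundary computation is unjustified. The fix is simply to return to your first paragraph: the corollary is an immediate specialization of Corollary \ref{ChernProp}, with no residue analysis required.
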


Here $\mathscr{W}$ is the set of finitely many simple $V$-modules; $a_W$ is the conformal dimension of a simple $V$-module $W$ (\S \ref{sec:VMod}); for $I \subseteq [n]=\{1,\dots,n\}$, we set $M^I:=\otimes_{i \in I}M^i$; and the last sum is over $i,I$ such that $i\in\{0,\dots,g\}$ and $I\subseteq [n]$, modulo the relation $(i,I)\equiv(g-i,I^c)$.  	

\smallskip

As for the first Chern class in Corollary \ref{FirstChern}, the Chern classes depend on the central charge of the vertex algebra and the conformal dimensions (or weights) of the modules.  Since $V$ is of CohFT-type, the central charge and the conformal dimensions of the modules are rational \cite{DongLiMasonModular}. 

\smallskip

\noindent {\em{Plan of paper:}} We start in \S \ref{sec:background} with some background on vertex algebras. In particular, there we describe the sheaf of coinvariants  $\mathbb{V}_g(V;M^\bullet)$ and its  dual, the sheaf of conformal blocks.   In \S \ref{sec:coinvariants} we review a selection of results on vector bundles defined by representations of vertex algebras of CohFT-type,  mainly from \cite{dgt} and \cite{dgt2}, which will be used to prove the statements above. Theorem \ref{thm:CohFT} is proved  in \S \ref{sec:PrfThmCohFT} and Corollary \ref{ChernProp}  in \S \ref{sec:chernMgnBar}.  In  \S \ref{sec:ExamplesSection} we review the invariants necessary to compute the Chern classes in several examples, including  the moonshine module vertex algebra $V^{\natural}$ and even lattice vertex algebras.  We discuss the problem for commutant and  orbifold vertex algebras, illustrating with para\-fermion vertex algebras,  and orbifolds of lattice and parafermion vertex algebras.  
 
\smallskip
 
From this and prior work, it is clear that the vector bundle of coinvariants from modules over vertex algebras of CohFT-type have a number of properties in common with their classical analogues, for which much has already been discovered.  For instance, bundles of coinvariants defined from modules over affine Lie algebras are particularly interesting on $\overline{\mathcal{M}}_{0,n}$, where they are globally generated, and their sections define morphisms \cite{fakhr}.  In particular, by studying their Chern classes one can learn about the maps they define.  In \S \ref{Necessary} we discuss questions one might explore with this in mind.


\section{Background} \label{sec:background}

Here we briefly review  vertex algebras, their modules,  related Lie algebras, and the vector spaces of coinvariants they define. We  refer the reader to \cite{fhl, lepli, bzf, dgt, dgt2} for  details. We use   notation as in \cite{dgt2}, where further information and references on these topics can be found.

\subsection{The Virasoro algebra} \label{sec:Virasoro}
The \emph{Witt (Lie) algebra} $\textrm{Der}\,\mathcal{K}$ is the Lie algebra  $\mathbb{C}(\!( z)\!) \partial_z$ generated by $L_p:=-z^{p+1}\partial_z$, for $p\in \mathbb{Z}$, with Lie bracket given by $[L_p, L_q] = (p-q) L_{p+q}$.

The \emph{Virasoro (Lie) algebra} $\textrm{Vir}$ is a central extension of $\textrm{Der}\,\mathcal{K}$ which is generated by a formal vector $K$ and the elements $L_p$, for $p\in \mathbb{Z}$, with Lie bracket given by
\[
[K, L_p]=0, \qquad [L_p, L_q] = (p-q) L_{p+q} + \frac{1}{12} (p^3-p)\delta_{p+q,0}\,K.
\]
A representation of $\textrm{Vir}$ has \emph{central charge} $c\in \mathbb{C}$ if 
$K\in \textrm{Vir}$ acts as $c \cdot \textrm{id}$.

\subsection{Vertex operator algebras}\label{sec:CVADef}
A \textit{vertex operator algebra} is a four-tuple  $\left(V, \bm{1}^{V}, \omega, Y(\cdot,z)\right)$ with: $V=\oplus_{i \geq 0} V_i$  a $\mathbb{Z}_{\geq 0}$-graded $\mathbb{C}$-vector space with \mbox{$\dim V_i<\infty$};  two distinguished elements  $\bm{1}^{V} \in V_0$ (the \emph{vacuum vector}) and $\omega \in V_2$ (the \emph{conformal vector});  a linear map 
\mbox{$Y(\cdot,z)\colon V \rightarrow  \textrm{End}(V)\left\llbracket z,z^{-1} \right\rrbracket$}  that assigns to  $A \in V$ the \textit{vertex operator} $Y(A,z) :=\sum_{i\in\mathbb{Z}} A_{(i)}z^{-i-1}$. 
These data are required to satisfy suitable axioms, see e.g., \cite[\S 1.1]{dgt2}. We review below some of the consequences which  will be used in what follows. When no confusion arises, we refer to the four-tuple as $V$.

The Fourier coefficients of the fields $Y(\cdot, z)$ endow $V$ with a series of products indexed by $\mathbb{Z}$, that is, $A*_iB:=A_{(i)}B$, for $A,B\in V$. These products are  \textit{weakly} commutative and \textit{weakly} associative.

The \emph{conformal structure} of $V$ realizes the Fourier coefficients $\omega_{(i)}$ as a representation of the Virasoro algebra on $V$ via the identifications $L_p=\omega_{(p+1)}$  and $K = c \cdot \text{id}_V$ for a constant $c \in \CC$ called the \emph{central charge} of $V$. 
Moreover,  $L_0$ is required to act as a degree operator on $V$, i.e., $L_0|_{V_i}=i\cdot \textrm{id}_{V_i}$, and  $L_{-1}$ (the \emph{translation operator}) is given by $L_{-1}A=A_{(-2)}\bm{1}^{V}$, for  $A\in V$.

As a consequence of the axioms, one has $A_{(i)}V_k\subseteq V_{k+\deg(A) -i-1}$ for homogeneous $A\in V$ \cite{zhu}, hence the \emph{degree} of the operator $A_{(i)}$ is defined as $\deg A_{(i)}:= \deg (A) -i-1.$

\subsection{Modules of vertex operator algebras}
\label{sec:VMod} 

There are a number of ways to define a module  over a vertex operator algebra $V$.  We take a $V$-module  $M$ to be a module over the universal enveloping algebra $\mathscr{U}(V)$ of $V$ (defined by I. Frenkel and Zhu \cite{FrenkelZhu}, see also \cite[\S 5.1.5]{bzf}) satisfying three finiteness properties. Namely, we assume that: 
(i) $M$ is a finitely generated $\mathscr{U}(V)$-module;
(ii) $F^0 \,\mathscr{U}(V)v$ is finite-dimensional, for every $v$ in $M$; and 
(iii) for every $v$ in $M$, there exists a positive integer $k$ such that $F^k\, \mathscr{U}(V)v=0$. These conditions are as in  \cite[Def.~2.3.1]{NT}.
Here, $F^k \,\mathscr{U}(V)\subset \mathscr{U}(V)$ is the vector subspace topologically generated by compositions of operators with total degree less than or equal to $-k$.

E. Frenkel and Ben-Zvi  \cite[Thm 5.1.6]{bzf} showed that there is an equivalence of categories between $\mathscr{U}(V)$-modules satisfying property (iii) and the so-called \textit{weak} $V$-modules, which a priori are not graded.   However, with the additional assumptions (i) and (ii), one can show the modules have a grading by the natural numbers.
Such a $V$-module  consists of a pair $\left(M,Y^M(\cdot,z)\right)$, where $M=\oplus_{i\geq 0}\, M_i$ is a $\mathbb{Z}_{\geq 0}$-graded $\mathbb{C}$-vector space,  and \mbox{$Y^M(\cdot,z)\colon V \rightarrow  \textrm{End}(M)\left\llbracket z,z^{-1} \right\rrbracket$}  is a linear function that assigns to $A \in V$ an  $\textrm{End}(M)$-valued vertex operator $Y^M(A,z) :=\sum_{i\in\mathbb{Z}} A^M_{(i)}z^{-i-1}$.   Moreover, by condition (i), if $A\in V$ is  homogeneous, then $ A^M_{(i)} M_k \subseteq M_{k + \deg(A) -i-1}$. 

 The $V$-modules we work with are also known in the literature as finitely generated \textit{admissible} $V$-modules   (see for instance, \cite{AbeBuhlDong} for the definitions of weak and admissible $V$-modules). 

As for $V$, one has that $M$ is also naturally equipped with an action of the Virasoro algebra with central charge $c$, induced by the identification of $\omega_{(p+1)}^M$ with $L_p$. 
When $M$ is a simple $V$-module, there exists  $a_M \in \CC$, called the \emph{conformal dimension} (or \textit{conformal weight}) of $M$, such that $L_0(v)=(\deg(v) + a_M)v$, for every homogeneous  $v$ in $M$ \cite{zhu}. 

The vertex algebra $V$ is a module over itself, sometimes referred to as the \textit{adjoint module} \cite[\S4.1]{lepli} or the \textit{trivial module}. In what follows the set of simple modules over $V$ is denoted $\mathscr{W}$.

\subsection{Contragredient modules}\label{ContraMod} 
Contragredient modules provide a notion of duality for $V$-modules. We recall their definition following \cite[\S 5.2]{fhl}.  
For a vertex algebra $V$ and a $V$-module $\left(M = \oplus_{i\geq 0} M_i, Y^M(-,z)\right)$,  its \textit{contragredient module} is $\left( M',  Y^{M'}(-,z)\right)$, where $M'$ is the graded dual of $M$, that is, $M':=\oplus_{i\geq 0} M_i^\vee$, with $M^\vee_i:=\textrm{Hom}_{\mathbb{C}}(M_i,\mathbb{C})$, and
 $ Y^{M'}(-,z) \colon V \to \mathrm{End} \left(M' \right) \left\llbracket z, z^{-1}\right\rrbracket$
is the unique linear map determined by
\begin{equation*}
 \label{eq:contr} 
 \left\langle Y^{M'}(A,z)\psi, m \right\rangle = \left\langle \psi, Y^M\left(e^{zL_1}(-z^{-2})^{L_0}A, z^{-1}\right)m \right\rangle
\end{equation*}
for $A\in V$, $\psi\in M'$, and $m\in M$. Here  $\langle \cdot, \cdot \rangle$ is the natural pairing between a vector space and its graded dual.

\subsection{The Lie algebra ancillary to $V$}
\label{sec:LV}
The \emph{Lie algebra ancillary to} $V$ is defined as the quotient
\[
\mathfrak{L}(V)= \mathfrak{L}_t(V) := \big( V\otimes \mathbb{C}(\!(t)\!) \big) \big/ \textrm{Im}\, \partial,
\]
where $t$ is a formal variable and $\partial:= L_{-1}\otimes \textrm{id}_{\mathbb{C}(\!(t)\!)} +  \textrm{id}_V \otimes \partial_t$. The image of $A\otimes t^i\in V\otimes \mathbb{C}(\!(t)\!)$ in $\mathfrak{L}(V)$ is denoted by $A_{[i]}$. Observe that $\mathfrak{L}(V)$ is spanned by series of the form $\sum_{i\geq i_0} c_i \, A_{[i]}$, for $A\in V$, $c_i\in\mathbb{C}$, and $i_0\in \mathbb{Z}$. The Lie bracket is induced by
\begin{equation*}
\label{eq:bracket}
\left[A_{[i]}, B_{[j]} \right] := \sum_{k\geq 0} {i \choose k} \left(A_{(k)}\cdot B \right)_{[i+j-k]}.
\end{equation*}
There is a canonical Lie algebra isomorphism  between $\mathfrak{L}(V)$ and the current Lie algebra in \cite{NT}. In what follows, the formal variable $t$ is interpreted as a formal coordinate at a point $P$ on an algebraic curve. A coordinate-free description of $\mathfrak{L}(V)$ is provided in \S \ref{sec:coordfreeLV}.

For a $V$-module $M$, the Lie algebra homomorphism $\mathfrak{L}(V)\rightarrow \textrm{End}(M)$ defined by 
\begin{equation*}
\label{eq:actionLVonM}
\sum_{i\geq i_0} c_i A_{[i]}\mapsto \textrm{Res}_{z=0}\; Y^M(A,z)\sum_{i\geq i_0} c_i z^i dz
\end{equation*}
induces an action of $\mathfrak{L}(V)$ on $M$. For instance, $A_{[i]}$ acts as the Fourier coefficient $A_{(i)}$ of the vertex operator $Y^M(A,z)$.

\subsection{The vertex algebra bundle and the chiral Lie algebra}
\label{sec:coordfreeLV}
Let $(C, P_\bullet)$ be a stable $n$-pointed curve. As illustrated in \cite{bzf} for smooth curves and in \cite{dgt2} for stable curves, one can construct a vector bundle $\mathscr{V}_C$ (the \textit{vertex algebra bundle}) on $C$ whose fiber at each point of $C$ is (non-canonically) isomorphic to $V$. 
For a smooth open subset $U\subset C$ admitting a global coordinate (e.g., if there exists an \'etale map $U\rightarrow \mathbb{A}^1$),   the choice of a global coordinate on $U$ gives a trivialization $\mathscr{V}_C|_U\cong V\times U$.
When $C$ is smooth, the vertex algebra bundle $\mathscr{V}_C$ is constructed via descent along the torsor of formal coordinates at points in $C$. We refer to \cite{dgt2} for the description of $\mathscr{V}_C$ in the nodal case.
The bundle $\mathscr{V}_C$ is naturally equipped with a flat connection $\nabla \colon \mathscr{V}_C \to \mathscr{V}_C \otimes \omega_C$ such that, up to the choice of a formal coordinate $t_i$ at $P_i$, one can identify
\begin{equation}
\label{eq:LVcoordfree}
H^0\left(D_{P_i}^\times, \mathscr{V}_C \otimes \omega_C/ \text{Im}\nabla\right) \cong \mathfrak{L}_{t_i}(V).
\end{equation}
Here $D^\times_{P_i}$  is the punctured formal disk about the marked point $P_i \in C$. 
As shown in \cite[\S\S 19.4.14, 6.6.9]{bzf}, the isomorphism \eqref{eq:LVcoordfree} induces the structure of a Lie algebra independent of coordinates on the left-hand side.

The \emph{chiral Lie algebra} is defined as
\[ 
\mathscr{L}_{C\setminus P_\bullet}(V) := H^0\left(C \setminus P_\bullet, \mathscr{V}_C \otimes \omega_C/ \text{Im}\nabla\right).
\]
This space has indeed  the structure of a Lie algebra after \cite[\S19.4.14]{bzf}.

\subsection{The action of the chiral Lie algebra on $V$-modules}\label{sec:ChiralAction}
Consider the linear map $\varphi$ given by restriction of sections from $C\setminus P_\bullet$ to the $n$ punctured formal disks $D^\times_{P_i}$ using the formal coordinates $t_i$ at $P_i$:
\begin{equation*}
\label{eq:phiL}
\varphi\colon \mathscr{L}_{C\setminus P_\bullet}(V) \rightarrow  \oplus_{i=1}^n \mathfrak{L}_{t_i}(V).
\end{equation*}
After \cite[\S 19.4.14]{bzf}, $\varphi$ is a homomorphism of Lie algebras.
The map $\varphi$ thus induces an action of $\mathscr{L}_{C\setminus P_\bullet}(V)$ on $\mathfrak{L}(V)^{\oplus n}$-modules which is used  to construct coinvariants.
See also \cite[Proposition 3.3.2]{dgt2}.

\subsection{Sheaves of coinvariants and conformal blocks} \label{sec:coinvcblock} 
We briefly recall how to construct  sheaves of coinvariants on $\overline{\mathcal{M}}_{g,n}$, and  refer to \cite[\S 5]{dgt} for a detailed exposition. To a stable $n$-pointed curve $(C, P_\bullet)$ of genus $g$  such that $C\setminus P_\bullet$ is affine, and to $V$-modules $M^1, \dots, M^n$, we associate the space of coinvariants
\[ 
\mathbb{V}(V;M^\bullet)_{(C,P_\bullet)} := M^\bullet_{\mathscr{L}_{C\setminus P_\bullet}(V)}= M^\bullet / \mathscr{L}_{C\setminus P_\bullet}(V) \cdot M^\bullet,
\]
where $M^\bullet= \otimes_{i=1}^n M^i$.
Thanks to the \emph{propagation of vacua}, it is possible to define these spaces also when $C\setminus P_\bullet$ is not affine via a direct limit. Carrying out the construction
relatively over $\overline{\mathcal{M}}_{g,n}$, one defines the quasi-coherent \emph{sheaf of coinvariants}
 $\mathbb{V}_g(V;M^\bullet)$ on $\overline{\mathcal{M}}_{g,n}$ assigned to $M^\bullet$. The dual sheaf $\mathbb{V}_g(V;M^\bullet)^\dagger$ is  the \emph{sheaf of conformal blocks} assigned to  $M^\bullet$.

A brief history of coinvariants and conformal blocks and of the work on their properties  can be found in 
\cite[\S\S 0.1 and 0.2]{dgt2}.


\section{Vector bundles of coinvariants} \label{sec:coinvariants}

Here we review a number of results about vector bundles of coinvariants defined from  representations of vertex algebras satisfying certain natural hypotheses.  Motivated by the new results proved here, we name vertex algebras satisfying such hypotheses as {\em{vertex algebras of CohFT-type}}.

\subsection{Vertex algebras of CohFT-type} \label{sec:CohFT}
We define a vertex algebra $V$ to be \emph{of CohFT-type} if $V$ is a simple, self-contragredient vertex operator algebra such that:

\begin{enumerate}[(I)]
\item \label{CohFTI} $V=\oplus_{i\in \mathbb{Z}_{\ge 0}}V_i$ with $V_0\cong \mathbb{C}$; 
\item \label{CohFTII} $V$ is {\em{rational}}, that is,  every finitely generated $V$-module is a direct sum of simple $V$-modules; and
\item \label{CohFTIII}$V$ is $C_2$-{\em{cofinite}}, that is, the subspace  
\[
C_2(V):=\mathrm{span}_{\mathbb{C}}\left\{A_{(-2)}B \,:\, A, B \in V\right\}
\]
has finite codimension in $V$. 
\end{enumerate}

The set $\mathscr{W}$ of simple modules over a rational vertex algebra is finite, and
a simple module $M=\oplus_{i\geq 0}\,M_i$ over a rational vertex algebra satisfies $\dim M_i<\infty$  \cite{DongLiMasonTwisted}.

The assumptions (I)-(III) on the vertex algebra  have been found in \cite{dgt2} to imply that the sheaves of coinvariants are in fact vector bundles:

\begin{theorem}[{\cite[VB Corollary]{dgt2}}]
\label{thm:dgt2thm2}
For  a vertex algebra $V$ of CohFT-type, the sheaf of coinvariants $\mathbb{V}_g(V;M^\bullet)$ assigned to finitely generated admissible $V$-modules $M^1,\dots,M^n$ is a vector bundle of finite rank on the moduli space $\overline{\mathcal{M}}_{g,n}$.
\end{theorem}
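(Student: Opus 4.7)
The plan is to decouple the statement into two properties that together give the vector bundle structure: coherence with finite-dimensional fibers, established directly from hypotheses (I)--(III), and local freeness, obtained via the projectively flat logarithmic connection coming from \cite{dgt}. Since $\mathbb{V}_g(V;M^\bullet)$ is quasi-coherent by its definition as a cokernel of a map of quasi-coherent sheaves, these two finer facts combined yield the claim.

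For coherence and finite fiber dimension, I would first fix a stable pointed curve $(C,P_\bullet)$ and filter $M^\bullet = \bigotimes_{i=1}^n M^i$ by total $L_0$-eigenvalue, writing $M^\bullet_{\leq k}$ for the sum of homogeneous pieces of weight at most $k$. Each $M^\bullet_{\leq k}$ is finite-dimensional: by rationality (II) every $M^i$ decomposes into finitely many simple summands, and by \cite{DongLiMasonTwisted} the graded pieces of a simple module over a rational $C_2$-cofinite vertex algebra are finite-dimensional. The core claim is that for some $k_0$ depending on $(C,P_\bullet)$, the composite $M^\bullet_{\leq k_0} \hookrightarrow M^\bullet \twoheadrightarrow \mathbb{V}(V;M^\bullet)_{(C,P_\bullet)}$ is surjective. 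This is precisely where $C_2$-cofiniteness (III) enters: combined with the chiral Lie algebra action of $\mathscr{L}_{C\setminus P_\bullet}(V)$ on $M^\bullet$, the relations $A_{(-2)}B \in C_2(V)$ allow one to systematically replace any vector by one of strictly smaller conformal weight modulo $\mathscr{L}_{C\setminus P_\bullet}(V)\cdot M^\bullet$. This is in the spirit of the Nagatomo--Tsuchiya reduction \cite{NT} for smooth curves. To pass from fibers to the sheaf, one carries out this reduction uniformly over a local base, producing an \'etale-local surjection $\mathcal{O}^{\oplus N} \twoheadrightarrow \mathbb{V}_g(V;M^\bullet)$ and hence coherence.

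For local freeness I would invoke the projectively flat logarithmic connection on $\mathbb{V}_g(V;M^\bullet)$ constructed in \cite{dgt}, with poles along the simple normal-crossings boundary $\partial \overline{\mathcal{M}}_{g,n}$. A coherent sheaf on a smooth variety carrying a flat logarithmic connection along a normal-crossings divisor is automatically locally free: the connection provides integrable horizontal lifts of logarithmic tangent vectors, which trivialize the sheaf analytically via parallel transport on small polydisks adapted to the boundary strata. The main obstacle, in my view, is the uniform finite-dimensionality statement across the boundary: at a nodal curve the chiral Lie algebra changes character, acquiring insertions at the node, and one must control the fiber dimension as the curve degenerates in order to conclude coherence on all of $\overline{\mathcal{M}}_{g,n}$ rather than just on the smooth locus. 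The natural tool is \emph{factorization}, which identifies the fiber over a nodal curve with $\bigoplus_{W\in\mathscr{W}} \mathbb{V}(V;M^\bullet \otimes W \otimes W')$ on the normalization with two extra marked points. Finiteness of $\mathscr{W}$ (from rationality) bounds ranks by induction on the dual graph, and verifying compatibility of this sewing with both the generating sections from the previous step and the logarithmic connection is the technical heart of \cite{dgt2}.
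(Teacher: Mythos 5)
First, a remark on the comparison itself: the paper you were given does not prove this statement. It is imported verbatim from \cite{dgt2} (the ``VB Corollary'' there), so the only thing to compare against is the strategy of that cited proof, not an argument in the present text.

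Your two-step architecture --- coherence from $C_2$-cofiniteness via an $L_0$-weight reduction in the spirit of Nagatomo--Tsuchiya, then local freeness from the connection together with factorization/sewing at the boundary --- does match the strategy of \cite{dgt2}, and the coherence half of your sketch is sound in outline. The genuine gap is the sentence asserting that a coherent sheaf with a flat \emph{logarithmic} connection along a normal-crossings divisor is automatically locally free. That is false: logarithmic vector fields are tangent to the divisor, so parallel transport along horizontal lifts of logarithmic tangent directions never moves you transversally off a boundary stratum and cannot trivialize the sheaf there. Concretely, the structure sheaf of the divisor (or a skyscraper supported on it) carries a flat logarithmic connection, so coherence plus a logarithmic connection proves nothing at boundary points; the connection argument only yields local freeness over the open locus $\mathcal{M}_{g,n}$. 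What actually handles the boundary in \cite{dgt2} is the sewing theorem: the completion of the sheaf at a boundary point is identified with a direct sum over $W\in\mathscr{W}$ of coinvariants on the partial normalization tensored with a power-series ring, which is free by induction on the number of nodes, and coherence plus freeness of all completions gives local freeness. You do invoke factorization at the end, but you position it as a rank-bounding and ``compatibility'' device rather than as the mechanism that replaces your false local-freeness claim; as written, your argument does not establish local freeness at any boundary point.
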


Since $V$ is rational and $C_2$-cofinite, the central charge of $V$ and the conformal dimension of every simple $V$-module are rational numbers \cite{DongLiMasonModular}.
When $V$ is  of CohFT-type, the Chern character of the bundles of coinvariants form a cohomological field theory, as we verify below, hence the name.

\subsection{The connection}
Following \cite{dgt}, the restriction of the vector bundles ${\mathbb{V}}_g(V; M^\bullet)$ to $\mathcal{M}_{g,n}$ support a projectively flat  connection. We can  explicitly describe this  using the language of Atiyah algebras \cite{besh}. Given a line bundle $L$ on a variety, the Atiyah algebra $\mathcal{A}_L$ is the sheaf of first order differential operators acting on $L$. 
An analoguous construction holds for a virtual line bundle $L^{x}$, where $x\in \mathbb{C}$, yielding the Atiyah algebra $x \mathcal{A}_L$ \cite{ts}. With this terminology, the connection on ${\mathbb{V}}_g(V; M^\bullet)$ is explicitly described as follows:

\begin{theorem}[{\cite{dgt}}]
\label{At}
For $n$ simple modules $M^i$ of conformal dimension $a_i$ over a  vertex algebra $V$ of CohFT-type and central charge $c$, the  Atiyah algebra $\frac{c}{2}\mathcal{A}_\Lambda +\sum_{i=1}^n a_i \mathcal{A}_{\Psi_i}$ 
acts on the restriction of ${\mathbb{V}}_g(V; M^\bullet)$ to $\mathcal{M}_{g,n}$, specifying a twisted  $\mathcal{D}$-module structure. 
\end{theorem}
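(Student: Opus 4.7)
The plan is to construct the logarithmic connection first on the smooth locus $\mathcal{M}_{g,n}$ via the Virasoro uniformization, then identify its failure of flatness with the claimed Atiyah algebra, and finally extend across the boundary divisor $\Delta$ with at worst logarithmic poles. Throughout, I would rely on the chiral Lie algebra machinery of \S\ref{sec:coordfreeLV}--\ref{sec:ChiralAction} to convert infinitesimal deformations of $(C,P_\bullet)$ into operators on $\otimes_i M^i$ that descend to coinvariants.

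First, fix $(C,P_\bullet)$ with formal coordinates $t_i$ at $P_i$. The Virasoro uniformization identifies tangent vectors to (the formal-coordinate-free version of) $\overline{\mathcal{M}}_{g,n}$ at $(C,P_\bullet)$ with elements of $\oplus_i \textrm{Der}\,\mathcal{K}_{t_i}$ modulo the image of the Lie algebra of vector fields on $C\setminus P_\bullet$. Because $\omega\in V_2$ generates a Virasoro subalgebra inside $\mathfrak{L}(V)$, the image under the map $\varphi$ of \S\ref{sec:ChiralAction} of the $\omega$-components of $\mathscr{L}_{C\setminus P_\bullet}(V)$ realizes precisely these relations. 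Hence, \'etale-locally on $\overline{\mathcal{M}}_{g,n}$, every tangent vector $\theta$ can be lifted to an element of $\oplus_i \textrm{Der}\,\mathcal{K}_{t_i}$, unique modulo this image.

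Next, I would define the action of $\theta$ on a section of $\mathbb{V}_g(V;M^\bullet)$ by further lifting $\theta$ to $\oplus_i \textrm{Vir}_{t_i}$ and acting on $\otimes_i M^i$ via the identification $L_p = \omega^{M^i}_{(p+1)}$; this descends to coinvariants since $\varphi\bigl(\mathscr{L}_{C\setminus P_\bullet}(V)\bigr)$ acts trivially by definition. The ambiguity lives in the Virasoro central extension: two lifts of the same $\theta$ differ by a scalar given by the Virasoro cocycle, which on modules acts by multiplication by $c$. A standard Mumford-type identification (cf. \cite{bzf}) matches this cocycle, restricted to deformations of the complex structure of $C$, with $\tfrac{c}{2}\,c_1(\Lambda)$, where $\Lambda$ is the determinant of the Hodge bundle; this produces the $\tfrac{c}{2}\mathcal{A}_\Lambda$ summand. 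The remaining ambiguity, from changes of formal coordinate at the marked points, acts on a simple module $M^i$ of conformal weight $a_i$ by conjugation with the exponentiated $L_0$-action, whose scalar part is a character of the coordinate-change group with curvature $a_i\, c_1(\Psi_i)$; this contributes $\sum_i a_i \mathcal{A}_{\Psi_i}$. Summing gives the Atiyah algebra in the statement acting on $\mathbb{V}_g(V;M^\bullet)|_{\mathcal{M}_{g,n}}$.

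The main obstacle is the extension across the boundary $\Delta$. Here I would invoke the factorization property of \cite{dgt2}: near a boundary stratum, the bundle restricts to a direct sum over the finite set $\mathscr{W}$ of simple modules, with a pair $W, W'$ inserted at the new marked points of the partial normalization. In a local sewing chart with smoothing parameter $q$ (a local equation for $\Delta$), the deformation direction $q\,\partial_q$ acts at the two new points by $L_0\otimes \textrm{id} + \textrm{id}\otimes L_0$, and the accompanying factor $\tfrac{dq}{q}$ is exactly the logarithmic pole along $\Delta$. Verifying that the resulting logarithmic connection still has the Atiyah algebra structure above—with the same Hodge and $\psi$-class coefficients—requires comparing the Virasoro uniformization of the Deligne--Mumford compactification with the logarithmic tangent sheaf $T_{\overline{\mathcal{M}}_{g,n}}(-\log\Delta)$, together with the standard restriction formulas for $\Lambda$ and $\Psi_i$ to boundary strata. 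This compatibility, and the check that the central-charge computation is unchanged by passing to nodal curves, is the technical heart of the proof.
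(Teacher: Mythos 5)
The paper does not prove Theorem \ref{At}: it is imported verbatim from \cite{dgt}, where it is established (in \S 7 of that paper, and more generally for quasi-coherent sheaves of coinvariants). Measured against that proof, your outline follows essentially the same route: Virasoro uniformization of the moduli of curves with formal coordinates, the action of $\oplus_i \mathrm{Vir}_{t_i}$ through the conformal vector $\omega$, identification of the central term of the resulting projective action with $\tfrac{c}{2}\mathcal{A}_\Lambda$ \`a la Beilinson--Schechtman \cite{besh} and Tsuchimoto \cite{ts}, the $L_0$-eigenvalue $a_i$ on the lowest graded piece of a simple module producing $a_i\mathcal{A}_{\Psi_i}$, and the logarithmic extension over $\Delta$ via the smoothing parameter with $q\partial_q$ acting by $L_0$ at the branches of the node. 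So the architecture is right.

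Two points need repair. First, your justification for why the Virasoro lift descends to coinvariants --- ``since $\varphi\bigl(\mathscr{L}_{C\setminus P_\bullet}(V)\bigr)$ acts trivially by definition'' --- is not a proof and is not even correctly stated: the chiral Lie algebra acts trivially on the \emph{quotient} by construction, not on $M^\bullet$. For an operator $T$ on $M^\bullet$ to descend to $M^\bullet/\mathscr{L}_{C\setminus P_\bullet}(V)\cdot M^\bullet$ one must show $T$ preserves the subspace $\mathscr{L}_{C\setminus P_\bullet}(V)\cdot M^\bullet$, i.e., that $[T,\varphi(\sigma)]$ again lies in the image of $\varphi$ for every global section $\sigma$. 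This normalizer property is a genuine computation with the bracket on $\mathfrak{L}(V)$ (using $\omega_{(0)}=L_{-1}$, $\omega_{(1)}=L_0$, etc., to recognize the commutator as the Lie derivative of a global section of $\mathscr{V}_C\otimes\omega_C/\mathrm{Im}\,\nabla$), and it is precisely where the conformal structure of $V$ enters; it is carried out in \cite{dgt} and cannot be waved away. Second, you correctly identify the boundary extension as the technical heart but then do not carry it out: the compatibility of the sewing construction with the log tangent sheaf $T_{\overline{\mathcal{M}}_{g,n}}(-\log\Delta)$ and the persistence of the coefficients $\tfrac{c}{2}$ and $a_i$ over nodal curves is exactly the content that distinguishes the statement on $\overline{\mathcal{M}}_{g,n}$ from its restriction to $\mathcal{M}_{g,n}$ (Corollary \ref{chMgn}), so as written the proposal is an accurate roadmap of \cite{dgt} rather than a proof.
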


Here $\Lambda$ is the Hodge line bundle on $\mathcal{M}_{g,n}$ and $\Psi_i$ is the cotangent line bundle at the $i$-th marked point on $\mathcal{M}_{g,n}$.
Theorem \ref{At} generalizes  the analogous statement for bundles of coinvariants of integrable representations at a fixed level over affine Lie algebras \cite{ts}. This is proved more generally for \textit{quasi-coherent} sheaves of coinvariants  in \cite[\S 7]{dgt}.

\subsection{Chern classes on ${\mathcal{M}}_{g,n}$}\label{sec:chernMgn}
The explicit description of the connection determines the Chern character of the restriction of $\mathbb{V}_g(V;M^\bullet)$ on $\mathcal{M}_{g,n}$:

\pagebreak

\begin{corollary}[{\cite[Corollary 2]{dgt}}]\label{chMgn}
Let $V$ be a vertex algebra of CohFT-type and central charge~$c$, and let $M^1,\dots,M^n$ be n simple $V$-modules of conformal dimension $a_i$. Then
\[
{\rm ch}\left(  \mathbb{V}_g(V;M^\bullet)  \right) = {\rm rank} \,\mathbb{V}_g(V;M^\bullet) \, \cdot \, \exp\left( \frac{c}{2}\,\lambda + \sum_{i=1}^n a_i \psi_i \right)\, \in H^*\!\left(\mathcal{M}_{g,n}\right).
\]
\end{corollary}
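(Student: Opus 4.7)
The plan is to derive the formula entirely from the Atiyah-algebra action of Theorem~\ref{At}, after restricting to the open locus $\mathcal{M}_{g,n}$ where the logarithmic singularities along $\Delta$ become regular. The argument runs in parallel to the classical affine Lie algebra case treated in \cite{moppz}.

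\textbf{Step 1: Chern character from an Atiyah-algebra action.} I would first recall the following standard fact about Atiyah algebras. Let $X$ be smooth, let $L$ be a line bundle on $X$, and let $x\in\mathbb{C}$. If the twisted Atiyah algebra $x\mathcal{A}_L$ acts on a vector bundle $E$ of rank $r$, then $E$ carries a projectively flat connection with twisting class $x\cdot c_1(L)$. Consequently $\mathrm{End}(E)$ admits a genuine flat connection, so $\mathrm{ch}(\mathrm{End}(E))=r^2\in H^0(X)$, and unwinding $\mathrm{End}(E)=E\otimes E^\vee$ via the splitting principle yields
\[
\mathrm{ch}(E)=r\,\exp\!\left(x\,c_1(L)\right).
\]
By linearity of Atiyah algebras in the line bundle, an action of a finite sum $\sum_j x_j\mathcal{A}_{L_j}$ on $E$ yields
\[
\mathrm{ch}(E)=r\,\exp\!\left(\sum_j x_j\,c_1(L_j)\right).
\]

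\textbf{Step 2: Restriction to $\mathcal{M}_{g,n}$ and conclusion.} Restricting Theorem~\ref{At} to $\mathcal{M}_{g,n}\subset\overline{\mathcal{M}}_{g,n}$, the logarithmic singularities along the boundary $\Delta$ are absent, so the Atiyah algebra $\frac{c}{2}\mathcal{A}_\Lambda+\sum_{i=1}^n a_i\mathcal{A}_{\Psi_i}$ acts on $\mathbb{V}_g(V;M^\bullet)|_{\mathcal{M}_{g,n}}$ without singularities. Applying Step~1 with $L_0=\Lambda$, $x_0=c/2$, and $L_i=\Psi_i$, $x_i=a_i$ for $i=1,\dots,n$, and writing $\lambda=c_1(\Lambda)$, $\psi_i=c_1(\Psi_i)$ for their restrictions to $\mathcal{M}_{g,n}$, one obtains the claimed identity in $H^*(\mathcal{M}_{g,n})$.

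\textbf{Main obstacle.} All of the representation-theoretic and geometric work is already packaged into Theorem~\ref{At}: namely, realizing the projectively flat logarithmic connection on $\mathbb{V}_g(V;M^\bullet)$ as coming from this specific Atiyah algebra with these specific coefficients $c/2$ and $a_i$. The remaining passage from an Atiyah-algebra action to the exponential Chern character is a formal computation. The only caveat is that the formula is stated on the open part $\mathcal{M}_{g,n}$; extending it across $\Delta$ requires a separate analysis of boundary contributions via factorization, which is the content of Corollary~\ref{ChernProp} and is carried out in \S\ref{sec:chernMgnBar}.
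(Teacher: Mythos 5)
Your overall strategy is the same as the paper's: Corollary~\ref{chMgn} is deduced by restricting the Atiyah-algebra action of Theorem~\ref{At} to $\mathcal{M}_{g,n}$ and invoking general facts about bundles carrying such an action. The paper's proof consists of exactly two cited ingredients: (a) an action of $a\mathcal{A}_L$ on $E$ forces $c_1(E)=(\mathrm{rank}\,E)\,a\,c_1(L)$ \cite[Lemma 5]{mop}, and (b) a projectively flat connection forces $\mathrm{ch}(E)=(\mathrm{rank}\,E)\exp\bigl(c_1(E)/\mathrm{rank}\,E\bigr)$ \cite[(2.3.3)]{MR909698}. Your Step~2 and your closing remark about the boundary match the paper.

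However, the justification you give for the key general fact in Step~1 does not work as written. From flatness of $\mathrm{End}(E)$ you get $\mathrm{ch}(E\otimes E^\vee)=r^2$, i.e., $\sum_{i,j}e^{\alpha_i-\alpha_j}=r^2$ in terms of Chern roots $\alpha_i$ of $E$. But the graded piece of this expression in $H^{2k}$ is $\tfrac{1}{k!}\sum_{i,j}(\alpha_i-\alpha_j)^k$, which vanishes identically for every odd $k$ by antisymmetry in $(i,j)$ --- for \emph{any} bundle $E$. Hence the identity $\mathrm{ch}(\mathrm{End}(E))=r^2$, even combined with knowledge of $c_1(E)$, constrains only the even power sums of the centered roots $\alpha_i-c_1(E)/r$ and leaves the odd ones (hence $\mathrm{ch}_3(E)$, $\mathrm{ch}_5(E),\dots$) completely undetermined; ``unwinding via the splitting principle'' cannot recover $\mathrm{ch}(E)$ from $\mathrm{ch}(\mathrm{End}(E))$. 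The correct route, and the one the paper takes through its citation of \cite{MR909698}, is to apply Chern--Weil to $E$ itself: a projectively flat connection has curvature of the form $\omega\otimes\mathrm{id}_E$ for a $2$-form $\omega$, whence $\mathrm{ch}(E)=r\exp(\omega)$ with $\omega=c_1(E)/r$, and then \cite[Lemma 5]{mop} identifies $c_1(E)/r$ with $\frac{c}{2}\lambda+\sum_i a_i\psi_i$. With Step~1 replaced by these two standard facts, your argument coincides with the paper's proof.
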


Here $\lambda=c_1(\Lambda)$ and $\psi_i=c_1(\Psi_i)$. The corollary follows from: (a) a vector bundle $E$ over a smooth base $S$ with an action of the Atiyah algebra $\mathcal{A}_{L^{\otimes a}}$, for $L\rightarrow S$ a line bundle and $a\in \mathbb{Q}$, satisfies $c_1(E)=({\rm rank}\, E) \, aL$ \cite[Lemma 5]{mop}; and
(b) the projectively flat connection  implies that ${\rm ch}(E)=({\rm rank}\, E)\,  {\rm exp}(c_1(E)/{\rm rank}\, E)$  \cite[(2.3.3)]{MR909698}.

From Corollary \ref{chMgn}, the total Chern class is
\[
c\left(  \mathbb{V}_g(V; M^\bullet)  \right) =
\left( 1+ \frac{c}{2}\,\lambda + \sum_{i=1}^n a_i \psi_i \right)^{{\rm rank} \,\mathbb{V}_g(V; M^\bullet)} \in H^*\!\left(\mathcal{M}_{g,n}\right).
\]

\subsection{The factorization property}\label{sec:VerlindeSection}
After \cite[Factorization Theorem]{dgt2},  for $V$ of CohFT-type, the bundles $\mathbb{V}_g(V; M^\bullet)$ satisfy the {\em{factorization property}}. Assume that the curve $C$ has one nodal point $Q$, let $\widetilde{C}$ be the partial normalization of $C$ at $Q$, and let $Q_\bullet=(Q_+, Q_-)$ be the pair of preimages of $Q$ in $\widetilde{C}$. 

\begin{theorem}[{\cite[Factorization Theorem]{dgt2}}]\label{thm:Fact} 
Let $V$ be a vertex algebra of CohFT-type. Then
\[
\mathbb{V}\left(V;M^{\bullet}\right)_{(C,P_{\bullet})} \cong \bigoplus_{W \in \mathscr{W}} \mathbb{V}\left(V;M^{\bullet}\otimes W\otimes W' \right)_{\left(\widetilde{C},P_{\bullet}\sqcup Q_\bullet\right)}.
\]
\end{theorem}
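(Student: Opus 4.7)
The strategy is to build a natural comparison map between the two sides out of the contragredient pairings, and to show it is an isomorphism by verifying separately that it descends to coinvariants, is surjective, and is injective. For each simple $V$-module $W \in \mathscr{W}$, the graded duality pairing $\langle \cdot, \cdot \rangle \colon W \otimes W' \to \mathbb{C}$ of \S\ref{ContraMod} induces a linear map $\pi_W \colon M^\bullet \otimes W \otimes W' \to M^\bullet$ by $m \otimes w \otimes w' \mapsto \langle w, w' \rangle \, m$. Summing over the finite (by rationality of $V$) set $\mathscr{W}$ gives a candidate morphism $\Phi$ from the right-hand side to the left-hand side, and the work is to promote it to an isomorphism of coinvariants.

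To show $\Phi$ descends to coinvariants one works locally at the node. In a formal neighborhood of $Q$ on $C$, the local ring is $\mathbb{C}\llbracket s_+, s_-\rrbracket/(s_+ s_-)$, identifying $s_\pm$ with formal coordinates at $Q_\pm$ on $\widetilde{C}$. Pullback identifies $\mathscr{L}_{C\setminus P_\bullet}(V)$ with the subspace of $\mathscr{L}_{\widetilde{C}\setminus (P_\bullet \sqcup Q_\bullet)}(V)$ whose expansions at $Q_+$ and $Q_-$ are related by the M\"obius-type substitution implicit in the defining formula
\[
\langle Y^{W'}(A, z) \psi, w \rangle = \langle \psi, Y^W(e^{zL_1}(-z^{-2})^{L_0} A, z^{-1}) w \rangle
\]
of the contragredient module. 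This formula expresses exactly that $\langle \cdot, \cdot \rangle$ is invariant under the diagonal action of matched local sections, so $\pi_W(\ell \cdot x) \equiv \ell \cdot \pi_W(x) \pmod{\mathscr{L}_{C\setminus P_\bullet}(V) \cdot M^\bullet}$ for every $\ell$ that comes from $C$. For the polar sections in $\mathscr{L}_{\widetilde{C}\setminus (P_\bullet \sqcup Q_\bullet)}(V)$ that do not descend to $C$, summing over all $W \in \mathscr{W}$ and using the completeness of the graded duality pairings across the finite set of simple modules causes their total contribution to $\bigoplus_W \pi_W$ to vanish in the target.

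Surjectivity of $\Phi$ I would obtain from the propagation of vacua, enlarging the marked points of $(C, P_\bullet)$ by adjoining the node with the adjoint module $V$ attached, combined with $C_2$-cofiniteness to keep the resulting decomposition finite. The main obstacle will be injectivity, which I would attack by constructing a left inverse through sewing: a universal one-parameter smoothing $C_q \to \mathrm{Spec}\,\mathbb{C}\llbracket q \rrbracket$ of $C$ yields, by Theorem \ref{thm:dgt2thm2}, a vector bundle of coinvariants which extends flatly across $q = 0$. Expanding sections of this bundle in a basis adapted to simple modules inserted across the node produces canonical projectors onto each summand, inverse to the $\pi_W$ at the categorical level. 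The nontrivial input is the convergence and well-definedness of this sewing expansion, which relies on the finite-dimensionality of graded pieces of simple modules (rationality, \cite{DongLiMasonTwisted}) together with $C_2$-cofiniteness, ensuring only finitely many relations enter.
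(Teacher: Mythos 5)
First, a point of reference: the paper you are reading does not prove this statement; it is quoted from the Factorization Theorem of \cite{dgt2}, so your attempt has to be measured against the argument there (which follows the strategy of Tsuchiya--Ueno--Yamada and Nagatomo--Tsuchiya). Measured that way, your proposal breaks at its first step. The map $\pi_W(m\otimes w\otimes w')=\langle w,w'\rangle\, m$ does not descend to coinvariants, and the mechanism you invoke to save it cannot work: the right-hand side is a \emph{direct sum} of quotients, one for each $W$, so descent must hold for each $\pi_W$ separately --- summing over $\mathscr{W}$ produces no cancellation between independent summands. Concretely, take $\ell\in\mathscr{L}_{\widetilde{C}\setminus(P_\bullet\sqcup Q_\bullet)}(V)$ with a pole at $Q_+$, so that $\ell$ does not extend to the nodal curve. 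Then $\pi_W\bigl(\ell\cdot(m\otimes w\otimes w')\bigr)=\langle w,w'\rangle\,\ell\cdot m+\langle \ell_{Q_+}w,w'\rangle\, m+\langle w,\ell_{Q_-}w'\rangle\, m$, where $\ell_{Q_\pm}$ denotes the action through the expansion at $Q_\pm$; the first term is not in $\mathscr{L}_{C\setminus P_\bullet}(V)\cdot M^\bullet$, while the last two are scalar multiples of $m$ and cannot cancel it for general $m$. The natural map in fact goes the opposite way, $m\mapsto m\otimes\sum_k w_k\otimes w^k$, inserting the canonical element of a \emph{completion} of $W\otimes W'$ dual to the pairing, and even then one must show it lands in a finite-dimensional quotient and is well defined.

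More fundamentally, nothing in your outline produces the actual content of the theorem: why the node contributes exactly $\bigoplus_{W\in\mathscr{W}}W\otimes W'$. In \cite{dgt2} this is the heart of the matter: one first identifies $\mathbb{V}(V;M^\bullet)_{(C,P_\bullet)}$ with coinvariants on the normalization where an \emph{induced} module built from Zhu's associative algebra $A(V)$ is placed at $Q_\bullet$, and only then uses rationality --- semisimplicity of $A(V)$ and of the module category --- to decompose that induced module as $\bigoplus_W W\otimes W'$. Your proposal uses rationality only to make $\mathscr{W}$ finite and never introduces any mechanism that singles out the simple modules; likewise $C_2$-cofiniteness enters \cite{dgt2} through finiteness statements about the induced module and the sewing expansion, not merely ``to keep the decomposition finite.'' Finally, your injectivity step is circular here: local freeness and flat extension of the sheaf of coinvariants across $q=0$ in a smoothing family (Theorem \ref{thm:dgt2thm2} and the sewing property) are themselves deduced from factorization in \cite{dgt2}, so they cannot be used as inputs. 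Propagation of vacua only adds or removes insertions of the adjoint module $V$ and does not by itself yield surjectivity of your $\Phi$.
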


When $\widetilde{C}= C_{+}\sqcup C_{-}$ is disconnected, with $Q_{\pm} \in C_{\pm}$, one has:
\[
\mathbb{V}\left(V;M^{\bullet}\otimes W\otimes W' \right)_{(\widetilde{C},P_{\bullet}\sqcup Q_\bullet)}\cong  \mathbb{V}\left(V;M^{\bullet}_{+}\otimes W \right)_{X_{+}}\otimes \mathbb{V}\left(V;M^{\bullet}_{-}\otimes W' \right)_{X_-}
\]
where 
$X_{\pm}=\left(C_{\pm}, P_\bullet|_{C_{\pm}}\sqcup Q_{\pm}\right)$,  and $M^{\bullet}_{\pm}$ are the modules at the $P_\bullet$ on $C_{\pm}$.
The factorization property extends in families of nodal curves. More generally, the \emph{sewing property} holds, extending the factorization property over the formal neighborhood of families of nodal curves \cite{dgt2}.

\subsection{Finding ranks through recursions}
As a consequence of the factorization property, the rank of $\mathbb{V}_g(V; M^\bullet)$, equal to the dimension of the vector space of coinvariants $\mathbb{V}\left(V;M^{\bullet}\right)_{(C,P_{\bullet})}$ at any pointed curve $(C,P_\bullet)$, can be computed when $(C,P_{\bullet})$ is maximally degenerate.  Using propagation of vacua \cite[\S 10.3.1]{bzf} and inserting points with the adjoint module $V$ if necessary, 
one may reduce to the case when all irreducible components of $C$ are rational curves with three special points, and thus  the rank may be expressed as sum of products of dimensions of vector spaces of coinvariants on three-pointed rational curves. 

As the following two examples show, formulas for the ranks can be readily identified in some simple cases. A third such recursive calculation is carried out for bundles defined from modules over even lattice vertex algebras in Example \ref{mLatticeRank}.

\begin{example} The rank of $\mathbb{V}_{1}(V;V)$ on $\overline{\mathcal{M}}_{1,1}$  equals the cardinality of the set of simple $V$-modules $\mathscr{W}$. This follows from factorization and the equality \eqref{DeltaRule} in the next section.
\end{example}

\begin{example}\label{RankOne}Let $V$ be a vertex algebra of CohFT-type with no nontrivial modules.  Bundles of coinvariants of modules  over $V$ on $\overline{\mathcal{M}}_{g,n}$ have rank~$1$.   
\end{example}

In \S \ref{sec:TQFTcalc} we  use the formalism of semisimple TQFTs to reconstruct the ranks from the fusion rules.  Rank computations for bundles defined from modules over even lattice vertex algebras are done from this perspective in Examples \ref{computationlattice} and \ref{mLatticeRankFormal}.


\section{Proof of Theorem \ref{thm:CohFT} and TQFT computations} \label{sec:PrfThmCohFT}
In this section, using the results from \S \ref{sec:coinvariants}, we prove Theorem \ref{thm:CohFT}, showing  bundles of coinvariants defined by representations of vertex algebras of  CohFT-type give rise to cohomological field theories  (for more on CohFTs,  see e.g., \cite[\S 2]{moppz}).   Theorem \ref{thm:CohFT} is proved as in the case of coinvariants from affine Lie algebras treated in \cite[\S 3]{moppz}, and we follow their approach.

\subsection{The CohFT of Chern characters of coinvariants}
\label{sec:CohFTChern}
Let us start by defining the data of the CohFT. Let $V$ be a vertex algebra of CohFT-type.
As $V$ is rational, the set $\mathscr{W}$ of simple $V$-modules is finite.  The Hilbert space of the CohFT is $\mathcal{F}(V):=\mathbb{Q}^\mathscr{W}=\oplus_{W\in \mathscr{W}}\mathbb{Q} \,h_W$.
The CohFT is defined by the classes
\[
\mathrm{ch}\left( \mathbb{V}_g(V; M^\bullet)\right) \in H^*\!\left( \overline{\mathcal{M}}_{g,n}\right),
\]
for $2g-2+n>0$ and $V$-modules $M^1,\dots, M^n$ viewed as elements of $\mathcal{F}(V)$ (hence necessarily finitely-generated), and extending by linearity. 	
The vector space has a pairing $\eta$ defined by $\eta(h_M,h_N)=\delta_{M,N'}$, where $N'$ is the contragredient of $N$. 
In addition, $\mathcal{F}(V)$ has a commutative, associative  product $*$ defined by
\[
h_M*h_N :=  \sum_{W\in\mathscr{W}} \dim \mathbb{V}_0\left(V; M, N, W'\right) h_W,
\]
for $M,N\in\mathscr{W}$ and extending by linearity, with unit $h_V$ corresponding to the adjoint module $V$. 
This product  extends linearly to the \textit{fusion algebra} $\mathcal{F}(V)_{\mathbb{C}}:=\mathcal{F}(V) \otimes_{\mathbb{Q}}\mathbb{C}$. This is a commutative, associative Frobenius algebra with unit. In particular, one has $\eta(a*b,c)=\eta(a,b*c)$, for $a,b,c \in \mathcal{F}(V)_{\mathbb{C}}$.

\begin{proof}[Proof of Theorem \ref{thm:CohFT}]
The axioms necessary to form a CohFT are verified thanks to the factorization property in families \cite[Thm 8.2.2]{dgt2}, propagation of vacua in families (\cite[\S 10.3.1]{bzf} for a single smooth pointed curve, and \cite[Prop.~3.6]{codogni} for families of stable curves, see also \cite[Thm 5.1]{dgt}), and the fact that given simple $V$-modules $M$ and $N$, one has 
\begin{equation}\label{DeltaRule}
{\rm rank}\,  \mathbb{V}_0(V; M,N,V)= \delta_{M,N'}.
\end{equation}
This  follows from the identification of the  three-point, genus zero conformal blocks for simple $V$-modules $M$, $N$, and $W$
with the vector space $\mathrm{Hom}_{A(V)}(A(W)\otimes_{A(V)}M_0,  N_0^\vee)$ \cite{FrenkelZhu, LiFusion}. Here $A(V)$ is Zhu's semisimple associative algebra assigned to $V$, and $A(W)$ is an $A(V)$-bimodule generalizing the algebra $A(V)$  for a $V$-module $W$ \cite{FrenkelZhu}.
For $W=V$, the space of conformal blocks  is thus isomorphic to $\mathrm{Hom}_{A(V)}(M_0, N_0^\vee)$.  The assumption that
$M$ and $N$ are simple $V$-modules gives that $M_0$ and $N_0^\vee$  are simple $A(V)$-modules, and thus \eqref{DeltaRule} follows from Schur's lemma.  We note that statements asserting \eqref{DeltaRule} can be found  in case $V\cong V'$ or for even lattice vertex algebras in \cite{HuangVerlinde2005, fhl, donle}, and \eqref{DeltaRule} is implicitly assumed elsewhere in the literature.

Finally, the CohFT is semisimple, or equivalently, the Frobenius algebra $\mathcal{F}(V)_{\mathbb{C}}$ is semisimple. This follows from the general argument in \cite[\S5 and Prop.~6.1]{BeauvilleVerlinde} using  contragredient duality in genus zero, non-negativity of ranks, and the factorization property. 
Contragredient duality, that is, 
\begin{equation}\label{ContragredientDuality}
\mathrm{rank}\, \mathbb{V}_0 \left(V; M^1, \dots, M^n\right) = \mathrm{rank}\, \mathbb{V}_0 \left(V; (M^1)', \dots, (M^n)'\right),
\end{equation}
is obtained in the affine Lie algebra case as a consequence of the stronger statement \cite[Prop.~2.8]{BeauvilleVerlinde}. In our case, we proceed as follows: By propagation of vacua, we can assume that enough of the modules $M^i$ are equal to $V$, and by the factorization property, we can reduce to compute the rank over a totally degenerate stable rational curve, such that each component has at least one marked point where the adjoint module $V$ is assigned. Then \eqref{ContragredientDuality} follows from \eqref{DeltaRule}.
\end{proof}

\subsection{Computing the TQFT}
\label{sec:TQFTcalc}
As a consequence of Theorem \ref{thm:CohFT}, the ranks of vector bundles of coinvariants of vertex algebras of CohFT-type form a semisimple TQFT. Hence all ranks are determined by the dimensions of vector spaces of coinvariants on a rational curve with three marked points.
We describe here the reconstruction of the TQFT of the ranks from the fusion rules following results on semisimple TQFTs \cite{teleman2012structure, LeeVakil}.

Let $\mathcal{F}(V)_{\mathbb{C}}$ be the fusion ring given from the semisimple TQFT  determined by a vertex algebra $V$ of CohFT-type, as in \S\ref{sec:CohFT}. Let $\{e_i\}_i$ be a \textit{semisimple basis} of $\mathcal{F}(V)$, that is, $\eta(e_i, e_j)=\delta_{i,j}$ and  $e_i * e_j= \delta_{i,j}  \lambda_i e_i$, for some $\lambda_i\in\mathbb{C}$. The values $\lambda_i$ are known as the \textit{semisimple values} of the TQFT. Let $\{e^i\}_i$ be the dual basis to $\{e_i\}_i$. 

\begin{proposition}\label{prop:CohFTRanks} 
The  ranks of the vector bundles of coinvariants on $\overline{\mathcal{M}}_{g,n}$ assigned to $n$ finitely-generated modules over a vertex algebra $V$ of CohFT-type  is given by the following linear functional on 
$\mathcal{F}(V)^{\otimes n}$:
\begin{equation*}
\label{eq:rank}
\sum_i \lambda_i^{2(g-1)+n} \underbrace{e^i\otimes \cdots\otimes e^i}_{n \,\mathrm{times}}.
\end{equation*}
\end{proposition}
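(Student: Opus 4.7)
The plan is to derive the formula from the general structure theory of semisimple 2D topological quantum field theories. By Theorem~\ref{thm:CohFT}, the degree zero part of the CohFT is precisely the rank assignment $(M^1,\dots,M^n) \mapsto \mathrm{rank}\, \mathbb{V}_g(V; M^\bullet)$, extended multilinearly to a semisimple 2D TQFT on the Frobenius algebra $\left(\mathcal{F}(V)_{\mathbb{C}}, *, \eta\right)$ of \S\ref{sec:CohFTChern}. It therefore suffices to compute this TQFT in its semisimple basis $\{e_i\}$.

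First, on the three-pointed rational curve, the definitions of $*$ and $\eta$ together with the orthonormality $\eta(e_i,e_j)=\delta_{ij}$ give
\[
\mathrm{rank}\,\mathbb{V}_0(V; e_i, e_j, e_k) = \eta(e_i * e_j,\, e_k) = \delta_{ij}\delta_{jk}\lambda_i,
\]
so the three-point function is diagonal with the semisimple values $\lambda_i$ on the diagonal. For arbitrary $(g,n)$, I would apply the factorization property of Theorem~\ref{thm:Fact} iteratively to degenerate a curve of type $(g,n)$ onto a maximally degenerate nodal curve whose dual graph is trivalent, involving $2g-2+n$ three-pointed rational components and $3g-3+n$ nodes. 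Factorization then expresses the rank as a sum, over all assignments of a simple module to each node, of products of three-point ranks, where each node contributes the insertion $\sum_{W\in\mathscr{W}} h_W \otimes h_{W'}$.

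Because $\eta(h_M, h_N) = \delta_{M,N'}$, this insertion is precisely the Casimir of $\eta$, which in the semisimple basis reads $\sum_i e_i \otimes e_i$. Thus every node forces its two adjacent indices to coincide, and, combined with the diagonality of the three-point function, all indices across the connected dual graph collapse to a single $i$. Each of the $2g-2+n$ pairs of pants then contributes a factor $\lambda_i$, giving $\lambda_i^{2g-2+n} = \lambda_i^{2(g-1)+n}$, while each of the $n$ external legs pairs against the inserted $x_j \in \mathcal{F}(V)$ via the dual basis element $e^i$. Summing over $i$ yields exactly the claimed linear functional $\sum_i \lambda_i^{2(g-1)+n}\, e^i\otimes \cdots\otimes e^i$.

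The main obstacle is really bookkeeping: one must check that the algebraic factorization isomorphism of \cite{dgt2} aligns with the topological gluing law used in the TQFT. This holds because factorization pairs the module $W$ at $Q_+$ with its contragredient $W'$ at $Q_-$, which was the motivation for defining $\eta(h_M, h_N) := \delta_{M,N'}$ in \S\ref{sec:CohFTChern}, so that the sewing propagator is literally the Casimir of $\eta$. Once this identification is in hand, the standard semisimple TQFT computation recorded in \cite{teleman2012structure, LeeVakil} applies verbatim and produces the asserted formula.
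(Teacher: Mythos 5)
Your argument is correct and takes essentially the same route as the paper: the paper's proof consists of the single sentence that the formula is a special case of \cite[Prop.~4.1]{LeeVakil} applied to the TQFT of Theorem~\ref{thm:CohFT}, and your write-up is a correct unpacking of exactly that computation (degeneration to a trivalent graph with $2g-2+n$ vertices, the diagonal three-point function $\delta_{ij}\delta_{jk}\lambda_i$, and the identification of the factorization sum $\sum_{W}h_W\otimes h_{W'}$ with the Casimir $\sum_i e_i\otimes e_i$ of $\eta$). No gaps; you have simply supplied the details the paper delegates to the citation.
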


\noindent The statement is a special case of \cite[Prop.~4.1]{LeeVakil} applied to our TQFT. Examples are given in \S \ref{sec:ExamplesSection}.
In the case of coinvariants from modules over affine Lie algebras, the above formula reproduces the classical \textit{Verlinde numbers} after some algebraic manipulations (see e.g., \cite{BeauvilleVerlinde,goller2017weighted}).


\section{Chern classes on $\overline{\mathcal{M}}_{g,n}$} \label{sec:chernMgnBar}
In this section we prove Corollary \ref{ChernProp} following \cite{moppz}.
We work  with a vertex algebra $V$ of  CohFT-type. In particular the set $\mathscr{W}$ of simple $V$-modules is finite.
The Chern characters of bundles of coinvariants are given by the polynomial $P_V(a_\bullet)$ defined as a sum over stable graphs. We start by reviewing stable graphs below, and then define the contributions to $P_V(a_\bullet)$ corresponding to vertices, edges, and legs.

\subsection{Stable graphs and module assignments} \label{sec:graph}
A \textit{stable graph} is the dual graph of a stable curve. We only recall  the basic features; for more details see, e.g., \cite{MR3264769}. A stable graph $\Gamma$ comes with a vertex set $V(\Gamma)$, an edge set $E(\Gamma)$, a half-edge set $H(\Gamma)$, and a leg set $L(\Gamma)$. Each leg has a label $i\in\{1,\dots, n\}$, and this gives an isomorphism  $L(\Gamma)\cong\{1,\dots, n\}$. Each edge $e\in E(\Gamma)$ is the union of two half-edges $e=\{h,h'\}$, with $h,h'\in H(\Gamma)$. Each vertex $v\in V(\Gamma)$ has a genus label~$g_v\in \mathbb{Z}_{\geq 0}$ and a valence $n_v$ counting the number of half-edges and legs incident to $v$. The genus of the graph $\Gamma$ is defined as $\sum_{v\in V(\Gamma)} g_v+h^1(\Gamma)$, where $h^1(\Gamma)$ is the first Betti number of $\Gamma$. 

A stable graph $\Gamma$ of genus $g$ with $n$ legs identifies a locally closed stratum in $\overline{\mathcal{M}}_{g, n}$ equal to the image of the glueing map of degree $|{\rm Aut}(\Gamma)|$:
\[
\xi_\Gamma\colon \prod_{v\in V(\Gamma)} \overline{\mathcal{M}}_{g_v, n_v} =:  \overline{\mathcal{M}}_\Gamma \rightarrow  \overline{\mathcal{M}}_{g, n}.
\]

Given a stable graph $\Gamma$, a \textit{module assignment} is a function of type
\[\mu\colon H(\Gamma) \longrightarrow \mathscr{W}\]
such that for $(h,h')\in E(\Gamma)$, one has
$\mu(h')=\mu(h)'$,
that is, $\mu(h')$ is the contragredient module of $\mu(h)$ (\S \ref{ContraMod}).

\subsection{Vertex contributions} \label{sec:vertex}
Fix a stable graph $\Gamma$ and a  module assignment $\mu\colon H(\Gamma)\rightarrow \mathscr{W}$. To each vertex $v$ of $\Gamma$ 
is assigned a collection of simple $V$-modules $M^{i_1}, \dots, M^{i_{n_v}}$, one for each leg or half-edge incident to $v$: for each leg $i$ incident to $v$, the module $M^i$ is assigned to $v$; and for each half-edge $h$ incident to $v$, the module $\mu(h)$ is assigned to $v$. 
The vertex contribution is defined as
\[
{\rm Cont}_\mu (v):= {\rm rank} \,\mathbb{V}_{g_v}\left(V; M^{i_1}, \dots, M^{i_{n_v}}\right).
\]

\subsection{Edge contributions} \label{sec:edge}
Fix a stable graph $\Gamma$ and a  module assignment $\mu\colon H(\Gamma)\rightarrow \mathscr{W}$. For each edge $e=\{h, h'\}$, let $a_{\mu(h)}$ be the conformal dimension of $\mu(h)$. The edge contribution is defined as
\[
{\rm Cont}_\mu (e):= \frac{1 - {\rm e}^{a_{\mu(h)} \left(\psi_h + \psi_{h'}\right)}}{\psi_h + \psi_{h'}}.
\]
This is well defined since $\mu(h)$ and $\mu(h')$ have equal conformal dimension.

\subsection{The polynomial $P_V(a_\bullet)$} \label{sec:poly}
Following the computations of \cite{moppz}, consider the following polynomial with coefficients in $H^*\!\left(\overline{\mathcal{M}}_{g,n} \right)$:
\[
P_V(a_\bullet):=
 {\rm e}^{\frac{c}{2}\lambda}\sum_{\Gamma,\,\mu}\frac{1}{|{\rm Aut}(\Gamma)|}\left(\xi_\Gamma\right)_*
\!\left(\prod_{i=1}^n {\rm e}^{a_i \psi_i} \! \prod_{v\in V(\Gamma)} \!{\rm Cont}_\mu (v) \!\prod_{e\in E(\Gamma)} {\rm Cont}_\mu(e) \!\right)\!,
\]
where $c$ is equal to the central charge of $V$. 
The sum in the formula is over all isomorphism classes of stable graphs $\Gamma$ of genus $g$ with $n$ legs, and over all  module assignments 
$\mu$.
For degree reasons, the exponentials are finite sums, and $P_V(a_\bullet)$ is indeed a polynomial.

For the vector bundle of coinvariants of modules over an affine vertex algebra, \cite{moppz} shows that
${\rm ch}\left(  \mathbb{V}(V; M^\bullet)  \right) = P_V(a_\bullet)$, where $V=L_\ell(\mathfrak{g})$ is the simple affine vertex algebra, and $M^i$ is a simple $L_\ell(\mathfrak{g})$-module of conformal dimension $a_i$, for each $i$.
We extend this result~to prove Corollary \ref{ChernProp}:	

\begin{proof}[Proof of Corollary \ref{ChernProp}]
From \cite[Lemma 2.2]{moppz}, a semisimple CohFT is uniquely determined by the restriction of the classes to $\mathcal{M}_{g,n}$.
More precisely, the results in \cite{moppz} imply that any semisimple CohFT whose restriction to $\mathcal{M}_{g,n}$ is as in Corollary \ref{chMgn} for some $c,a_i\in \mathbb{Q}$ is given by an expression as in the statement.
\end{proof}


\section{Examples and projects}\label{sec:ExamplesSection}

\subsection{Vertex algebras with no nontrivial modules}
We start with coinvariants constructed from a \textit{holomorphic} vertex algebra $V$ of CohFT-type, that is,  a vertex algebra of CohFT-type such that $V$ is the unique simple $V$-module.
  Any bundle of coinvariants of modules  over $V$ on $\overline{\mathcal{M}}_{g,n}$ has rank $1$ and first Chern class equal to $\frac{c}{2}\lambda$, where $c$ is the central charge of $V$. The rank assertion is in Example \ref{RankOne}. The first Chern class follows from Theorem~\ref{ChernProp}, as the conformal dimension of the adjoint  module is zero.

\begin{example}
 There are $71$  holomorphic vertex algebras of CohFT-type with conformal dimension $24$.  This very special class includes the moonshine module vertex algebra $V^{\natural}$ (whose  automorphism group is the monster group), and the vertex algebra given by the Leech lattice \cite{LamShimakura}.
For such   $V=\oplus_{i=0}^{\infty}V_i$, the weight one Lie algebra $V_1$ is
either semi-simple, abelian of rank $24$, or $0$.  If $V_1$ is abelian of rank $24$, then $V$ is isomorphic to the Leech lattice vertex algebra.  If 
$V_1 =0$, it is conjectured that  $V \cong V^{\natural}$.  Vertex algebras with the $69$ other possible Lie algebras $V_1$ have been constructed in \cite{LamShimakura}.  Each gives a vector bundle of coinvariants of rank 1 and first Chern class $12\lambda$.
\end{example}

\subsection{Chern classes of bundles from  even lattice vertex algebras}
\label{Lattice}
As we illustrate below, even lattice vertex  algebras are of CohFT-type, hence following Theorem \ref{thm:dgt2thm2}, their simple modules define vector bundles  on $\overline{\mathcal{M}}_{g,n}$.  

For the definitions of lattice vertex algebras we recommend \cite{Borcherds, flm1, DLattice, lepli}.  We briefly review the notation. 
Let $L$ be a positive-definite even lattice. That is, $L$ is a free abelian group  of finite rank $d$ together with a positive-definite bilinear form $(\cdot, \cdot)$ such that $(\alpha,\alpha)\in 2\mathbb{Z}$ for all $\alpha \in L$. 
One assigns to $L$ the \textit{even lattice vertex algebra} $V_L$. This has finitely many simple modules $\{V_{L+\lambda}\,|\,\lambda\in L'/L\}$, where 
\mbox{$L':=\{\lambda\in L\otimes_{\mathbb{Z}} \mathbb{Q} \, | \, (\lambda,\mu)\in\mathbb{Z}$}, \mbox{for all $\mu\in L\}$} is the dual lattice \cite{DLattice}.
 Contragredient modules are determined by $V_{L+\lambda}'=V_{L-\lambda}$.
The following statement follows from results in the literature.

\begin{proposition}[\cite{Borcherds, FLM, DLattice, donle, DongLiMasonModular}]
\label{prop:latticeva}
For a positive-definite even lattice $L$ of rank $d$, one has:
\begin{enumerate}[i)]

\item The lattice vertex algebra $V_L$ is of CohFT-type with central charge $c=d$;

\item The conformal dimension of the module $V_{L+\lambda}$ is  $\min_{\,\alpha \in L} \frac{(\lambda+\alpha,\lambda+\alpha)}{2}$;

\item For the fusion algebra $\mathcal{F}(V_L)_{\mathbb{C}}=\oplus_{\lambda\in L'/L}\mathbb{C}h_\lambda$, the product is given by
\[
h_{\lambda_1} * h_{\lambda_2} = h_{\lambda_1+\lambda_2}.
\]
\end{enumerate}
\end{proposition}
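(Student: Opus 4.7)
The plan is to assemble the three statements from existing results on lattice vertex algebras; essentially all of the content is available in the cited literature, and the task is to indicate which reference handles which piece and to fill in the short calculations. I would begin from the explicit construction
\[
V_L \;=\; S(\widehat{\mathfrak{h}}^-)\otimes \mathbb{C}[L], \qquad \mathfrak{h}=L\otimes_{\mathbb{Z}}\mathbb{C},
\]
due to Borcherds and Frenkel-Lepowsky-Meurman, together with the parallel presentation $V_{L+\lambda}=S(\widehat{\mathfrak{h}}^-)\otimes \mathbb{C}[L+\lambda]$ of its simple modules indexed by $\lambda\in L'/L$ (DLattice). With these two formulas in hand, everything reduces to reading off structural information from the grading and invoking standard results.

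For (i), the non-negative integer grading of $V_L$ with $V_0\cong \mathbb{C}\cdot\mathbf{1}^V$ is immediate from the presentation together with positive-definiteness of $(\cdot,\cdot)$ on $L$: only $e^0$ sits in degree zero. The conformal vector is the standard Heisenberg Virasoro element of rank $d$, and a direct computation of its bracket yields central charge $c=d$. Rationality of $V_L$ is the theorem of Dong (DLattice), and $C_2$-cofiniteness is part of the same circle of results; rationality of the central charge and of the conformal dimensions of the simple modules is then the statement of DongLiMasonModular, so $V_L$ satisfies all the hypotheses of CohFT-type.

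For (ii), on $V_{L+\lambda}$ the operator $L_0$ acts on the $\mathbb{C}[L+\lambda]$ factor by
\[
L_0\, e^{\lambda+\alpha} \;=\; \tfrac{1}{2}(\lambda+\alpha,\lambda+\alpha)\, e^{\lambda+\alpha}, \qquad \alpha\in L,
\]
while each Heisenberg creation mode $\mathfrak{h}(-n)$ with $n>0$ only adds positive $L_0$-weight. The conformal dimension of $V_{L+\lambda}$, i.e.\ the minimum $L_0$-eigenvalue, is therefore $\min_{\alpha\in L}\tfrac{1}{2}(\lambda+\alpha,\lambda+\alpha)$, as claimed.

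For (iii), I would invoke the Dong-Lepowsky calculation of fusion rules for modules over even lattice vertex algebras (donle): all fusion coefficients are $0$ or $1$, and $V_{L+\lambda_1}$ fuses with $V_{L+\lambda_2}$ to give $V_{L+\lambda_1+\lambda_2}$, which in $\mathcal{F}(V_L)_{\mathbb{C}}$ reads $h_{\lambda_1}*h_{\lambda_2}=h_{\lambda_1+\lambda_2}$. The main obstacle, were one to aim for a self-contained account rather than a citation-based argument, lies in part (i), since rationality and $C_2$-cofiniteness each require genuine input from the structure theory of $V_L$; by contrast, (ii) is a short calculation from the explicit grading, and (iii) is a direct invocation of Dong-Lepowsky.
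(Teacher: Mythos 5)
Your proposal is correct and takes essentially the same approach as the paper: both assemble the result from the literature, citing Borcherds and Frenkel--Lepowsky--Meurman for the grading and central charge, Dong for rationality, Dong--Li--Mason for $C_2$-cofiniteness and rationality of the numerical invariants, and Dong--Lepowsky for the fusion rules. The only differences are cosmetic --- you spell out the short $L_0$-eigenvalue computation behind (ii), which the paper attributes implicitly to a page of Dong's work, while being slightly vaguer than the paper (which points to Proposition 12.5 of Dong--Li--Mason) about the precise source of $C_2$-cofiniteness.
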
 

\begin{proof}  By \cite{Borcherds, FLM} $V_{L}$ satisfies property $\textrm{(\ref{CohFTI})}$, by  \cite{DLattice} $V_{L}$ is rational, and hence satisfies property $\textrm{(\ref{CohFTII})}$, and by \cite[Proposition 12.5]{DongLiMasonModular}   $V_{L}$ is $C_2$-cofinite, so  satisfies property $\textrm{(\ref{CohFTIII})}$. The central charge is computed in \cite[Theorem 8.10.2]{FLM}, and the conformal dimension is deduced implicitly in \cite[page 260]{DLattice}. The fusion rules are described in \cite[Chapter 12]{donle}.
 \end{proof}

Proposition \ref{prop:latticeva} contains all ingredients needed to compute ranks of bundles of coinvariants from modules over $V_L$ applying Proposition \ref{prop:CohFTRanks} and their Chern characters applying Corollary \ref{ChernProp}. Let us discuss some examples.

\begin{rmk}\label{LatticeTypes} 
There are a number of   lattices $L$ one may use to construct the vertex algebras $V_L$, and it is straightforward to cook up a lattice of almost any rank, whose discriminant group $L'/L$ has arbitrary order. The order of the discriminant is the determinant of the Gram matrix for a basis of the lattice.   For instance, to obtain $L'/L\cong \mathbb{Z}/2k\mathbb{Z}$, for $k \in \mathbb{N}$, one can take a one-dimensional lattice with basis vector $e$ such that $(e,e)=2k$.  

For any root system (see e.g., \cite[pgs 352--355]{Involutions}),  there is a  root lattice $\Lambda$, and for those of type 
$A$, $D$, $E$, $F$, and $G$, the lattice is even, and gives rise to a vertex algebra $V_\Lambda$. Every irreducible root system $\Lambda$ corresponds to a simple Lie algebra $\mathfrak{g}_\Lambda$. If one normalizes the associated bilinear form (encoded by the Dynkin diagram), then in these cases one has $V_{\Lambda} \cong L_{1}(\mathfrak{g}_\Lambda)$, the simple affine vertex algebra at level $1$ (see \cite{FLM} and \cite[Rmk 6.5.8]{lepli} for  details).  The weight lattice gives the dual lattice $\Lambda'$.  For instance, for $\Lambda=A_{m-1}$ the root lattice has rank $m-1$, so that $V_{\Lambda}\cong L_{1}(\mathfrak{s}\mathfrak{l}_m)$ has conformal dimension $m-1$.  

One may also construct vertex algebras $V_L$ by taking $L$ to be the direct sum of  lattices described above, getting quotient lattices $L'/L$ that are of the form $\mathbb{Z}/m_1\mathbb{Z}\oplus \cdots \oplus \mathbb{Z}/m_k\mathbb{Z}$ for arbitrary $m_1, \ldots, m_k$. Such lattices may be interpreted as Mordel-Weil lattices (see e.g., \cite{Shioda, SchShi}). 

The root lattices are very special, and one may easily construct more general lattices with discriminant groups  isomorphic to $\mathbb{Z}/m\mathbb{Z}$.  For instance if $m$ is prime and congruent to $0$ or $3$ mod $4$, this can be done with a rank~$2$ even lattice (the order of the discriminant group for an even lattice of rank~$2$ is always congruent to $0$ or $3$ mod $4$).  As the diversity of quadratic forms cannot be overstated (see e.g., \cite{ConwayS,Conway15, 290}), there are many potentially interesting classes from lattice vertex algebras. 
\end{rmk}

\example Let $V_{L}$ be a vertex algebra given by an even unimodular lattice of rank $d$.  Because the lattice is unimodular, $V_{L}$ is self-contragredient and it has no nontrivial modules.  In particular, any bundle of coinvariants from modules over $V_{L}$  has rank one and first Chern class $\frac{d}{2} \lambda$.

\example \label{computationlattice} 
Consider an even lattice $L$ of rank $d$ with $L'/L \cong \ZZ/2\ZZ$. 
The vertex algebra $V_L$ has two simple modules $V=V_L$ and $W$.
From Proposition \ref{prop:latticeva}, the product in  $\mathcal{F}(V_L)_{\mathbb{C}}=\mathbb{C}h_V\oplus \mathbb{C}h_W$ is given by
\[ 
h_V * h_V= h_V, \qquad h_V * h_W = h_W, \qquad h_W * h_W = h_V.
\] 
With terminology as in \S\ref{sec:TQFTcalc}, a semisimple basis for $\mathcal{F}(V_L)_{\mathbb{C}}$ is
\[ 
e_1:= \dfrac{1}{\sqrt{2}}(h_V+h_W) ,\qquad e_2:= \dfrac{1}{\sqrt{2}}(h_V-h_W),
\]  
with semisimple values both equal to $\sqrt{2}$.
One has $h_V=\dfrac{1}{\sqrt{2}}(e_1+e_2)$ and $h_W = \dfrac{1}{\sqrt{2}}(e_1-e_2)$.
Applying Proposition \ref{prop:CohFTRanks},  the rank of the bundle $\mathbb{V}_g\left(V; V^{\otimes p} \otimes  W^{\otimes q}	\right)$ on $\overline{\mathcal{M}}_{g,n}$ for $p+q=n$ is
\begin{align*} 
\text{rank}\, \mathbb{V}_g\left(V; V^{\otimes p} \otimes  W^{\otimes q}\right) &= \sqrt{2}^{\,2g-2+n}\left(\frac{1}{\sqrt{2}^{\, n}} +(-1)^q \frac{1}{\sqrt{2}^{\, n}}\right)\\
& = 2^{g} \,\delta_{q, \text{ even}}.
\end{align*}
In particular, the rank vanishes when $q$ is odd. Applying Corollary \ref{ChernProp}, when $q=2r$, the Chern character is
\begin{multline*}
\mathrm{ch}\, \mathbb{V}_g\left(V; V^{\otimes p} \otimes  W^{\otimes 2r}\right)= \\
{\rm e}^{\frac{d}{2}\lambda}\sum_{\Gamma}\frac{2^{g-h^1(\Gamma)}}{|{\rm Aut}(\Gamma)|}\left(\xi_\Gamma\right)_*
\!\left(\prod_{i=1}^{2r} {\rm e}^{a \psi_i}  \!\prod_{e\in E(\Gamma)} \frac{1 - {\rm e}^{a \left(\psi_h + \psi_{h'}\right)}}{\psi_h + \psi_{h'}} \!\right).
\end{multline*}
Here $a$ is the conformal dimension of $W$.
The sum in the formula is over those isomorphism classes of stable graphs $\Gamma$ of genus $g$ with $n$ legs such that for each vertex, the number of assigned $W$ at the incident legs is even. Note that the only module assignment $\mu$ contributing nontrivially to the polynomial $P_V$ in this case is $\mu\colon h\mapsto W$, for all half-edges $h$.

\example\label{mLatticeRank} Let $L$ be an even lattice such that $L'/L\cong \mathbb{Z}/m\mathbb{Z}$, for $m\geq 2$. Let $\mathscr{W}=\{V=W_0,  \dots, W_{m-1}\}$ be the set of simple $V_L$-modules. The fusion rules from \cite{donle} give 
\[ 
\text{rank}\,\mathbb{V}_0\left(V_L; W_i \otimes W_j \otimes W_k\right) = \delta_{i+j+k \, \equiv_m \, 0}.
\] 
This implies that 
\[ 
\text{rank}\,\mathbb{V}_0\left(V_L; W_0^{\otimes n_0} \otimes \cdots \otimes W_{m-1}^{\otimes n_{m-1}}\right) = \delta_{\sum_{j=0}^{m-1} j n_j \, \equiv_m \, 0}
\]
and by induction on the genus and the  factorization property, we can further deduce that
\begin{equation}
\label{eq:ranklattivevoazm}
\text{rank}\,\mathbb{V}_g\left(V_L;W_0^{\otimes n_0} \otimes \cdots \otimes W_{m-1}^{\otimes n_{m-1}}\right) = m^g \, \delta_{\sum_{j=0}^{m-1} j n_j \, \equiv_m \, 0}.
\end{equation}

\example\label{mLatticeRankFormal}
One can also obtain the rank found in Example \ref{mLatticeRank} using Proposition \ref{prop:CohFTRanks}.  
Namely, for an even lattice $L$ such that $L'/L\cong \mathbb{Z}/m\mathbb{Z}$, a semisimple basis for the fusion ring $\mathcal{F}(V_L)_{\mathbb{C}}=\oplus_{i=0}^{m-1}\,\mathbb{C}h_i$ is
\[
e_i:= \frac{1}{\sqrt{m}} \sum_{j=0}^{m-1} \rho^{ij} h_j, \qquad \mbox{for $i=0,\dots,m-1$,}
\]
where $\rho\in\mathbb{C}$ is a primitive $m$-th root of unity. One checks $e_i*e_i=\sqrt{m}\,e_i$, hence the semisimple values are all $\sqrt{m}$.
As in Example \ref{computationlattice}, one applies Proposition \ref{prop:CohFTRanks} to recover \eqref{eq:ranklattivevoazm}.

\begin{rmk}
Examples \ref{computationlattice}--\ref{mLatticeRankFormal} show that  ranks of bundles of coinvariants from modules over  an even lattice $L$ such that $L'/L\cong \mathbb{Z}/m\mathbb{Z}$ coincide with ranks of bundles of coinvariants from modules over the affine Lie algebra $\widehat{\mathfrak{sl}}_m$ at level one. 
However, while the ranks depend only on $L'/L$,  the Chern characters depend additionally on the quadratic form of $L$, responsible for the conformal dimension of the irreducible $V_L$-modules. 
It is reasonable to expect that classes from lattice vertex algebras could give a  collection of CohFTs larger than the one  obtained from the affine Lie algebra case.
\end{rmk}

\subsection{Commutants and the parafermion vertex algebras}\label{para}

For a vertex algebra $V$ and a vertex subalgebra $U$ of $V$, one may construct the \textit{commutant}, or \textit{coset}, vertex algebra  $\mathrm{Com}_V(U)$ of $U$ in $V$  \cite{FrenkelZhu}.  It  would be interesting to study the Chern classes for bundles of coinvariants of modules over $\mathrm{Com}_V(U)$ for pairs $U \subset V$ such that $\mathrm{Com}_V(U)$ is of CohFT-type.

Conjecturally, if $U$ and $V$ are both of CohFT-type, then $\mathrm{Com}_V(U)$ is also of CohFT-type.  However, $U$ and $V$ need not be of CohFT-type:  one such example is given by the  well-studied family of cosets of the Heisenberg vertex  algebra  in the affine vertex  algebra $L_{k}(\mathfrak{g})$ for a finite-dimensional simple Lie algebra $\mathfrak{g}$ at level $k\in \mathbb{Z}$.  The Heisenberg vertex algebra is not rational, nor $C_2$-cofinite (see e.g., \cite{bzf} for a discussion of the Heisenberg vertex  algebra). 
Nevertheless, the \textit{parafermion vertex algebras}  are known to be of CohFT-type  \cite{DongRen} (see also \cite{ArakawaLamYamada,DongLamWangYamada, DongLamYamada, DongWang1, DongWang2, DongWang3}). 
These are related to $W$-algebras  \cite{ArakawaLamYamada}. 
The necessary invariants for expressing the Chern classes of bundles of coinvariants of modules over parafermion vertex algebras are known from \cite{DongKacLi, DongRen, DWFusion}, and one could proceed as in~\S\ref{Lattice}.

\subsection{Orbifold vertex algebras}
    Let $G$ be a subgroup of the group of automorphisms of a vertex algebra $V$.  The \textit{orbifold vertex  algebra} $V^{G}$ consists of the fixed points of $G$ in $V$. 
In case $V$ is of CohFT-type, its full group of automorphisms $G$ is a finite-dimensional algebraic group \cite{DoGr}.  If  $G$ is also solvable, then $V^{G}$ will also be of CohFT-type  \cite{MiyamotoCyclic, CarnahanMiyamoto}. Conjecturally, $V^G$ is always of CohFT-type.
 We note that  even if  $V$  is holomorphic, and therefore has no non-trivial modules, the vertex algebra  $V^G$ will not generally be holomorphic \cite{gk, DVVV, DPR}.  
 
One could for instance consider orbifold vertex algebras created from  parafermion vertex algebras.
In some  cases, the simple modules and the fusion rules are known in the literature \cite{JW2017, JW2019, JW2019}. 

Similarly, one could construct orbifold vertex algebras starting from lattice vertex algebras.  
 In \cite{BE}, simple modules for orbifolds $V^{G}_L$, where $G$ is generated by an isometry of order two, are classified and their fusion rules are given.  Explicit examples with root lattices and Dynkin diagram automorphisms are given in \cite[\S 4]{BE}.  
 

\section{Questions}\label{Necessary}

Summarizing, bundles of coinvariants defined by  modules over vertex algebras of CohFT-type share three important properties  with their classical counterparts: They (i)  support a projectively flat logarithmic connection \cite{tuy,dgt}; (ii)  satisfy the {\em{factorization property}}, a reflection of their  underlying combinatorial structure \cite{tuy,dgt2}; and (iii) give rise to cohomological field theories, as we show here.  

As described in Remark \ref{LatticeTypes}, bundles of coinvariants from lattice vertex algebras are generalizations of those given by affine Lie algebras at level one.   It is natural to expect that other known properties of the classical case extend to the vertex algebra case, and a number of questions come to mind.  

\subsubsection*{Question 1}
Given a simple, simply connected algebraic group $G$ with associated Lie algebra $\mathrm{Lie}(G)=\mathfrak{g}$ and $\ell \in \mathbb{Z}_{>0}$,
the simple vertex algebra $V=L_{\ell}(\mathfrak{g})$ is of CohFT-type \cite{FrenkelZhu,DongLiMasonModular}.   By \cite{bl1, Faltings, KNR}, for a smooth algebraic curve $C$, 
there is a natural line  bundle $D$  on  the moduli stack $\mathrm{Bun}_{G}(C)$ of $G$-bundles on $C$ such that, for any point $P$ in  $C$, there is a canonical isomorphism 
\[\mathbb{V}\left(L_{\ell}(\mathfrak{g}); L_{\ell}(\mathfrak{g})\right)_{(C,P)}^{\dagger} 
\cong H^{0}\left(\textrm{Bun}_{G}(C), D^\ell\right),\]  where $L_{\ell}(\mathfrak{g})$  is the adjoint module over itself. 
By  \cite{P, LS}, given $V$-modules $M^{\bullet}$,
there is a line bundle  $L$ 
    on the moduli stack of quasi-parabolic $G$-bundles $\textrm{ParBun}_{G}(C, P_{\bullet})$, for which
$\mathbb{V}(V; M^{\bullet})_{(C, P_{\bullet})}^{\dagger}$ is isomorphic to the global sections of
$L$ on $\textrm{ParBun}_{G}(C, P_{\bullet})$.  This geometric picture holds for stable curves with singularities as well \cite{BF1} (see also \cite{BGDet}).

The automorphism group $\mathrm{Aut}(V)$ of a vertex algebra $V$ of CohFT-type is a finite-dimensional algebraic group \cite{DoGr}.  
The connected component  $\mathrm{Aut}(V)^0$ of $\mathrm{Aut}(V)$ containing the identity has been  described in a number of cases  \cite{DGR99, DGLattice, DGR2, DoGr}. 
For instance for $V=L_{\ell}(\mathfrak{g})$, one has $\mathrm{Lie}\left(\mathrm{Aut}(V)^0\right)\cong V_1 \cong \mathfrak{g}$ \cite{DoGr}.  For $V$ of CohFT-type, can one find  a geometric realization for conformal blocks defined from modules over $V$, for instance involving algebraic structures on curves related to $\mathrm{Aut}(V)^0$?  Ideas in this direction  have been considered  in \cite{UenoTheta} and \cite{BZFGeometric}.

\subsubsection*{Question 2}   Gromov-Witten invariants of smooth projective homogeneous spaces define base-point-free classes on $\overline{\mathcal{M}}_{0,n}$; divisors defined from Gromov-Witten invariants of $\mathbb{P}^r=\mathbb{G}r(1,r+1)$ are equivalent to first Chern classes of bundles   from integrable modules at level one over $\mathfrak{sl}_{r+1}$ \cite[Props 1.4 and 3.1]{BG2}.   Numerical evidence suggests a more general connection between classes of bundles 
at level $\ell$ with Gromov-Witten divisors for Grassmannians $\mathbb{G}r(\ell,r+\ell)$ \cite{BG2}.  By Witten's Dictionary, the quantum cohomology of Grassmannians can be used to compute ranks of conformal blocks bundles in type $A$ for any level \cite{WittenDictionary}.     Are there connections between other Gromov-Witten theories and the more general bundles of coinvariants studied here?

\subsubsection*{Question 3} Vector bundles defined by representations of  affine Lie algebras are globally generated in genus zero, and so Chern classes have valuable positivity properties.  For instance, first Chern classes are base-point-free,  giving rise to morphisms  \cite{fakhr}.   
 Can one give sufficient conditions on vertex algebras of CohFT-type and their modules so that the vector bundles of coinvariants are globally generated?  Chern classes of bundles  from certain Virasoro vertex algebras are not nef, so further assumptions must be made.
See \cite{dg} for initial results along these lines.

\subsubsection*{Question 4}   Bundles of coinvariants from affine Lie algebras
 give rise to morphisms from $\overline{\mathcal{M}}_{0,n}$ to Grassmannian varieties  \cite{fakhr}.
In the special case where the Lie algebra is of type A and modules are at level one, we know the image varieties parametrize configurations of weighted points on rational normal curves in projective spaces \cite{GiansiracusaSimpson, Giansiracusa, gg}.    If Chern classes given by representations of particular types of vertex algebras are base-point-free, can one give modular interpretations for the images of their associated maps?

\subsubsection*{Question 5}
Classes  associated to  bundles of coinvariants on $\overline{\mathcal{M}}_{0,n}$ defined by $V=L_{\ell}(\mathfrak{g})$ satisfy scaling and level-rank identities, and are zero above a critical level, allowing one to give sufficient conditions for when they lie on extremal faces of the nef cone \cite{ags, BGM, BGMAdditive}.   Do Chern classes studied here satisfy similar identities? Can one find criteria to ensure they  lie on extremal faces of cones of nef cycles?  Do bundles defined by particular modules over vertex operator algebras  generate extremal rays?  Bundles  from vertex operator algebras constructed from exotic lattices may be relevant.

\subsubsection*{Question 6}In \cite[Theorem 5.1]{BologneseGiansiracusa}, using factorization, Verlinde bundles
constructed from level one  integrable modules over $\mathfrak{sl}_{r+1}$ are shown to be isomorphic to both  GIT bundles \cite{BologneseGiansiracusa}, and to the $r$-th tensor 
power of cyclic bundles studied in \cite{FedCyclic}.  Are there other such identifications, for instance involving the line bundles of coinvariants on $\overline{\mathcal{M}}_{0,n}$  constructed from even lattice theories, as discussed in Example \ref{mLatticeRank}?


\section*{Acknowledgements} 

The authors are grateful to Yi-Zhi Huang for helpful discussions, and to Marian, Oprea, Pandharipande, Pixton, and Zvonkine, for their work in \cite{moppz}.
Conversations with Aaron Bertram from some years ago helped with TQFTs computations.  The authors also thank Daniel Krashen and Bin~Gui for discussions about lattices. We thank the referee for their valuable comments. Gibney was supported by NSF DMS--1902237.



\bibliographystyle{alphanumN}
\bibliography{Biblio}

\end{document}